	 \numberwithin{equation}{section}
	\DeclareMathSymbol{e}{\mathalpha}{operators}{`e}
	\DeclareMathOperator{\var}{Var}
	\renewcommand{\to}{\rightarrow}
	\renewcommand{\L}{\mathcal{ L}}
	\newcommand{\R}{\mathbb{R}}
	\newcommand{\CC}{\mathcal C}
	\newcommand{\BB}{\mathcal B}
	\newcommand{\N}{\mathbb{N}}
	\newcommand{\Z}{\mathbb{Z}}
	\renewcommand{\P}{\mathbb{P}}  
	\renewcommand{\AA}{\mathcal A}
	\newcommand{\F}{\mathscr{F}} 
	\newcommand{\E}{\mathbb{E}} 
	\newcommand{\Var}{\mathrm{Var}}
	\renewcommand{\H}{\mathbb H}
	\newcommand{\HH}{\mathcal H}
	\renewcommand{\1}{\mathbf{1}}
	\newcommand{\I}[1]{\mathbf{1}_{\left \{#1\right \}}}
	\renewcommand{\leq}{\leqslant}
	\renewcommand{\geq}{\geqslant}
	\renewcommand{\d}{{d}}
	\newcommand{\eps}{\varepsilon}
	\renewcommand{\SS}{\mathcal S}
	\newcommand{\TT}{\mathcal T}
	\let\epsilon\varepsilon
	\let\ln\log
	\newtheorem{defi}{Definition}[section]
	\newtheorem{lema}[defi]{Lemma}
	\newtheorem{claim}[defi]{Claim}
	\newtheorem{prop}[defi]{Proposition}
	\newtheorem{cor}[defi]{Corollary}
	\newtheorem*{rem}{Remark}
\begin{document}
	
	\title{On thin local sets of the Gaussian free field}
	\author{Avelio Sep\'ulveda}
	\thanks{This work was supported by the SNF grant \#155922. The author is part of the NCCR Swissmap.}
\address{Department of Mathematics, ETH Z\"urich, R\"amistr. 101, 8092 Z\"urich, Switzerland} 
\email{leonardo.sepulveda@math.ethz.ch}

	\begin {abstract}

We study how small a local set of the continuum Gaussian free field (GFF) in dimension $d$ has to be to ensure that this set is thin, which loosely speaking means that it captures no GFF mass on itself, in other words, that the field restricted to it is zero. We provide a criterion on the size of the local set for this to happen, and on the other hand, we show that this criterion is sharp by constructing small local sets that are not thin.  
	\end {abstract}

	\maketitle 
	
	\section{Introduction}
	
The Gaussian Free Field (GFF) is the natural analogue of Brownian motion when the time-set is replaced by a $d$-dimensional open domain $D$. The GFF is a fundamental object in probability and statistical physics. In two dimensions, its geometry is closely related to many other key objects such as Stochastic Loewner Evolutions \cite{Dub,SchSh2,MS1}, conformal loop ensembles \cite{MS,ASW2},  Liouville quantum gravity \cite{DS,JA2,APS}, quantum Loewner evolutions \cite{MSQLE,MSQLE2} and Brownian loop soups \cite{LJ, Titus, QW,ALS2}; note that the relation to loop soups is
in fact not restricted to the two-dimensional GFF.

 Unlike Brownian motion, when $d \ge 2$, the GFF is not a continuous function; it can only be defined as a random generalised function from $D$ into $\R$. However, the GFF has many properties analogue to those of the Brownian motion. In particular, it has a spatial Markov property. The spatial Markov property of the GFF states that for any deterministic closed set $A$ the distribution of the GFF in $D\backslash A$ is equal to the sum of the harmonic extension of the values of the GFF on $\partial A$, and an independent GFF in $D \setminus A$. Just as in the one-dimensional case, this Markov property can be upgraded into a strong Markov property, where the above decomposition holds also for some random sets $A$.
 Such multivariate Markov properties were first studied in the 70s and 80s \cite{Roz}, and recently reinterpreted and applied in the two-dimensional imaginary geometry framework \cite{SchSh2,MS1}. These sets, called local sets in \cite {SchSh2, MS1}, play roughly the same role, in the higher-dimensional setting, as stopping times; more precisely, the local set $A$ is 
 the analogue of the interval $[0,\tau]$ when $\tau$ is a one-dimensional stopping time. The notion of local sets makes sense and is natural for the GFF in any dimension, even if so far it has only been used  when $d=2$. 

One way to formally describe local sets is to say that there exists a coupling $(\Gamma,A,\Gamma_A)$ where $\Gamma$ is a GFF in $D$, $A$ is a random closed set and $\Gamma_A$ is a random field with the following properties: 
\begin {itemize}
 \item Conditionally on $(A, \Gamma_A)$, the distribution of $\Gamma -\Gamma_A$ is a GFF in $D \setminus A$. 
 \item For every deterministic open set $O$, on the event where $O$ and $A$ are disjoint, the restriction of $\Gamma_A$ to $O$ is 
 a harmonic function in $O$. More precisely, there exists a random harmonic function $h_A$ in $D\backslash A$ such that for all smooth function $f$,
 $(\Gamma_A,f) = \int_{D\backslash A}  h_A (x) f(x) dx $  on the event where the support of $f$ is contained in $D\backslash A$. 
\end {itemize}
The field $\Gamma_A$ can be understood as being equal to the field $\Gamma$ ``within $A$'' and to the harmonic extension $h_A$ 	of the values of the field on $\partial A$ in $D\backslash A$. 

In the present paper, we investigate how small a local set has to be (in terms of its fractal dimension) to ensure that, loosely speaking,   that $\Gamma$ restricted to $A$ is equal to $0$, in other words,  that ``$\Gamma_A=h_A$''.   We call a local set satisfying this property thin local sets.  As the GFF is not a function, the precise definition of thin local sets is not straight-forward, and it is discussed in what follows.

\subsection {Definition of thin local sets}\label{thin1} Let us start with a particular case. Assume that the harmonic function $h_A$ is a.s. integrable on $D\backslash A$\footnote{this for instance happens for the bounded-type thin local sets studied in \cite {ASW2} where $h_A$ is bounded}, being thin means that for any compactly supported smooth function $f$, $(\Gamma_A,f)$ is almost surely equal to $\int_{D\backslash A} h_A(z)f(z)dz$, even when the support of $f$ intersects $A$.

One of the main questions of this paper is to find a good definition of a thin local set when $h_A$ oscillates in the boundary. By this, we mean the case when the function $h_A$ is not integrable on $D \setminus A$. This framework should be thought of as the generic case as $h_A$ tends to oscillate wildly when it approaches $A$. This is especially true in higher dimensions where this is already the case when $A$ is a deterministic non-polar set.

There are many possible definitions for thin local sets, and we will discuss them in the last section of the paper. At this point and for the rest of the paper until Section \ref{Thin sets def}, we will fix a definition based on dyadic approximations.

Suppose that $D$ is a fixed bounded open domain in $\R^d$ for $d \ge 2$. For any $n \ge 0$, say that $s$ is an open dyadic hyper-cube of  side-length $2^{-n}$ (or just $2^{-n}$ dyadic hypercubes) if it is a translate of $(0,2^{-n})^d$ by some element in $(2^{-n}\Z)^d$. 
We call $\SS_n$ the set of all non-empty intersections of open $2^{-n}$-dyadic hypercubes with $D$ and $\TT_n$ the set of faces of elements of $\SS_n$. If $A$ is a closed set, we define $A_n$ to be the closure of the union of all elements of $\SS_n\cup \TT_n$ intersecting $A$. 

Let us note that for any closed set $A$, $A_n$ decreases to A and that for all $n\in \N$, $A_n$ can take only finitely many values. This allows us to define, for each smooth function in $D$, the random variable $(\Gamma_A,f\1_{D\backslash A_n})$. Indeed, one can simultaneously define $(\Gamma_{A},f\1_u)$ for any possible value $u$ of $D\backslash A_n$ , and then see that $ (\Gamma_A, f \1_{D\backslash A_n} )$ is a.s. equal to $\sum_n  (\Gamma_{A},f\1_u)\I{D\backslash A_n =u }$.

\begin {defi}[Thin local set]
A local set $A$ is a thin local set if for any smooth bounded function $f$ in $D$, the sequence of random variables $(\Gamma_A,f1_{D\backslash A_n} )$ converges in probability 
to $(\Gamma_A,f)$ as $n \to \infty$.
\end {defi}

The intuition behind this definition is that the limit of this sequence of random variables should be thought of as a way to make sense of $(\Gamma_A,f1_{D\backslash A})$, which then has to be the same as $(\Gamma_A,f)$. 

We leave it as an exercise to check that in the particular case of local sets where $h_A$ is integrable, this definition is equivalent to the fact that a.s. $(\Gamma_A,f)=\int_{D\backslash A} h_A(z) f(z)dz$. To do this, first one has to check that for all possible values of $u$ of $D\backslash A_n$, $(\Gamma_{A},f\1_u)$ is a.s. equal to $\int_{u} h_A(z)f(z) dz$.

Finally, let us note  that the choice of working with dyadic approximations is somewhat arbitrary and the question whether changing this choice would change the definition is in fact open. Additionally, even though the examples of non-thin local sets that we will describe in Section \ref{hints} are tailor-made for this particular approximation; it is easy to adapt them to many other analogous choices. We will comment further on this in Section \ref{Thin sets def}.

\subsection{Results}  The results of this paper quantify how small may be non-thin local set. For instance, a deterministic set is a thin local set if and only it has zero Lebesgue measure. However, as we shall see, when $d \ge 2$ there exist many (random) non-thin local sets that have zero Lebesgue measure. In some sense, this is because the GFF values can be explored in a way that captures large values of the GFF while keeping the explored set local and relatively small.

Let us briefly present our main results first when $d \ge 3$ and then $d=2$.
\begin{prop} Let $d\geq 3$ and $\Gamma$ be a GFF in $D\subseteq \R^d$, then\hypertarget{1d}:
	\begin{enumerate}
		\item [(1)$_d$]If $A$ is a local set of a $d$-dimensional GFF and a.s. has upper Minkowski dimension strictly smaller than $\max \{ 1 + (d/2),d-2 \} $, then it is thin\hypertarget{2d}.
		\item [(2)$_d$]There exist local sets of the GFF such that with positive probability their upper Minkowski dimension is equal to  $\max \{ 1 + (d/2),d-1 \} $ that are not thin local sets.
		\end {enumerate}
	\end{prop} 
The two different upper bounds in \hyperlink{1d}{(1)$_d$} have very different nature. The term $1+(d/2)$ comes from the fact that, because of the nature of the singularity of the $d$-dimensional Green's function, the variance of the integral of the GFF over an $\epsilon$-ball is of order $\epsilon^{-(d-2)}$. On the other hand, the term $d-2$ is related to the dimension of polar sets in dimension $d$. 

Note that the numbers of \hyperlink{1d}{(1)$_d$} and \hyperlink{2d}{(2)$_d$} match for $d=3,4$. In other words, the dimensions $5/2$ and $3$  play an important role in the size oflocal sets of the GFF in dimensions $d=3$ and $d=4$ respectively. Furthermore we believe that they should match for any $d\geq 3$. Thus, the threshold $(d/2)+1$ would then be valid up to $d=6$, and for $d \geq 6$, it should be $d-2$.

In fact,  \hyperlink{1d}{(1)$_d$} and \hyperlink{2d}{(2)$_d$} also hold in the two-dimensional case. However, the second statement is rather void as $1 + (d/2) = 2$, and to prove it one could just take $A$ to be the entire domain $\overline D$, which is clearly not thin. We derive the following more refined result when $d=2$:
\begin{prop}Let $\Gamma$ be a GFF in $D\subseteq \R^d$, then\hypertarget{12}:
	\begin{enumerate}
		\item[(1)$_2$] If $A$ is a local set of the two-dimensional GFF such that the expected value of the area of the $\eps$-neighbourhood of $A$ decays like  $o(|\log \eps|^{-1/2}  |\log|\log \eps||^{-1/2})$, then it is a thin local set\hypertarget{22}.
		\item[(2)$_2$] There exist local sets of the two-dimensional GFF for which the expected value of the area of their $\eps$-neighbourhood decays like  $O(|\log \eps|^{-1/2})$ and that are not thin local sets. 
	\end{enumerate}
\end{prop}
 As we see, in dimension $d=2$ there is a logarithmic term that appears. This is of no surprise, as the variance of the average GFF over an $\epsilon$-ball is of order $|\log \epsilon|$. It is important to remark that a result of the type of \hyperlink{22}{(2)$_2$} has also appeared in \cite{ALS}, where the authors show that the constructed local set has a non-trivial Minkowski measure with gauge $r\mapsto r^2|\log(r)|^{-1/2}$.
 
 As explained before proofs of statements of the type (1)$_d$ (i.e.  ``when the local set is small enough, then it is necessarily thin'') are based on two ideas. For the upper bound $1+(d/2)$, we use a first moment computation to show that very high values of the GFF are so sparse that they do not give mass. This allows us to assume that $\epsilon$-averages of the GFF are bounded by a certain deterministic function of $\epsilon$. For the upper bound $d-2$, we show that thin local sets which are polar do not give information about the GFF and thus they are thin (see Lemma \ref{polar}).
 
 It is somewhat more challenging to prove \hyperlink{2d}{(2)$_d$}, i.e. to construct well-chosen ``fairly small'' local sets and to prove that they are not thin. This is arguably the main contribution of the present paper. It is worthwhile noticing that in two-dimensions, it is possible to use the nested version of the Miller-Sheffield GFF-CLE$_4$ coupling to construct such a small yet non-thin local set \cite{ALS}, but when $d \ge 3$ other ideas are needed. 
 Our strategy consists in relating a particular exploration of the GFF with a branching Brownian motion. This idea is reminiscent of the one that was for instance used in the two-dimensional case in \cite{Bol-Er-Deu} to study the maximum of the discrete GFF. 
 The constructed set may also be interpreted as a local set approximation of perfect thick points (in the sense of \cite{Hu-Mi-Pe}, Section 3.2). Note that the main difficulty of this part is that the sets we study need to remain thin.
 
 The structure of the paper is the following: first, we briefly recall some fundamental properties of the continuous GFF and its local sets. Then, we  construct examples of local sets that prove the statements \hyperlink{2d}{(2)$_d$}.  After that, we prove the statements \hyperlink{1d}{(1)$_d$} and conclude with some comments about the definitions of thin local sets.

\subsection* {Acknowledgements.} First, I wish to thank my (former) advisor Wendelin Werner for having proposed me the problem, for all the inspiring discussions and specially for all the many times that he read the many manuscripts proposing each time many changes that made the paper (much more) readable. I also wish to thank Juhan Aru, for the inspiring and useful conversations and the (many) times he read this manuscript helping it to improve it every time. Additionally, I would like to thank Ron Rosenthal for the stoicism he showed when reading the very first version of this manuscript and commenting it. Finally, I would like to express my gratitude to an anonymous referee for his/her careful reading and comments.

\section{Preliminaries}

\subsection {GFF and scaling}
Introductions and basic results about the GFF can be found in  \cite{SchSh2,She,JA,WWln2,AS}. While the presentations in those references is in the two-dimensional setting, they can be extended without any difficulty to higher dimensions. Let us briefly remind some basic facts.

Throughout this paper, we use the function $\phi_d$ defined on $\R^d \setminus \{ 0 \}$ by $\phi_d (x) = (1/ 2 \pi) \times \log ( 1 / \| x \|)$ when $d =2$ and by $\phi_d(x) = 1 / (c_d \| x \|^{d-2} )$ when $d \ge 3$, where $c_d$ denote the $d-1$-dimensional surface area of the unit sphere in $\R^d$. 

Suppose that $D$ is $d$-dimensional open domain with non-polar boundary (this boundary can be empty if $d\ge 3$), and consider the Green's function with Dirichlet boundary condition in $D$ to be the unique function from $D \times D \setminus \{(x,x): \ x \in D \}$ to $\R_+$ that is harmonic in both variables, and such that for all given $x$ in $D$, $G_D (x,y) \to 0$ as $y \to \partial D$ and $G_D(x,y) \sim \phi_d(x-y)$ as $y \to x$. Recall that when $D \subset \tilde D$, then $G_D (x,y) \le G_{\tilde D} (x,y)$.

We can then define the space $\HH^{-1} (D)$ of functions on $D$, such that 
$$ \iint_{D\times D} f(x)G_D(x,y)f(y)\d x\d y <\infty .$$
The GFF in $D$ with zero boundary conditions is defined to be the centered Gaussian process $((\Gamma,f), f \in \HH^{-1}(D) )$ with covariance function
$$ \E\left[(\Gamma,f)(\Gamma,g) \right] = \iint_{D\times D}  f(x)G_D(x,y) g(y)\d x \d y.$$

It is well-known that this process exists, and that it is possible to find a version of the GFF such that almost surely, for all $\epsilon >0$, $\Gamma$ can be viewed as an element  of the Sobolev space $\HH^{1/2-d/4-\epsilon}$. Here, $\HH^{1/2-d/4-\epsilon}$ is the dual under the $\L^2$ product of the Sobolev space $\HH^{d/4-1/2+\epsilon}$ (see for instance Section 2.3 of \cite{She}).

The definition of the GFF immediately implies its scaling properties. If we define the domain 
$ z_0 + rD := \{ z_0 + r z \ : \ z \in D \}$,
then
\begin{align}\label{SGF}
	 G_{z_0 + rD} ( z_0 + rx , z_0 + ry ) = r^{2-d}  {G_D (x,y)}
\end{align}
(in two dimensions, a stronger result holds, as the Green's function is conformally invariant), which 
 yields the corresponding scaling properties for the GFF.

	\subsection {Local sets}
	We first very briefly review the definitions of local sets and some of their properties that are relevant for our purposes. This presentation is based in Section 1.3 of \cite{JA}
	
	Denote the family of all closed subsets of $D$ by $\CC(D)$.         Let $\Gamma$ be a GFF in $D$ and $C\in \CC(D)$. One can decompose $\Gamma$ into the sum of two independent processes $\Gamma_C$ and $\Gamma^C$ where almost surely, $\Gamma_C$ restricted to $D \setminus C$  is a harmonic function, and where  $\Gamma^C$ is a GFF in $D\backslash C$. This property is usually referred to as the spatial Markov property of the GFF. One can note that $\Gamma_C$ and $\Gamma^C$ are Gaussian processes that are also generalised functions, with respective covariance given by the Green's functions $G_D - G_{D \setminus C}$ and  $G_{D \setminus C}$. 
	
	Let $(\F_C)_{C\in \CC(D)}$ be a complete outside-continuous filtration indexed by $\CC (D)$. That is to say, $C \mapsto \F_C$ is non-decreasing, the $\sigma$-fields $\F_C$ are all complete with respect to the probability measure that we are working with, and for any decreasing sequence $(C_n)$, one has $\F (  \cap C_n) = \cap \F (C_n)$. We say that the GFF $\Gamma$ is adapted with respect to this filtration if, for all $C$, $\Gamma_C$ is $\F_C$-measurable while $\Gamma^C$ is independent of $\F_C$.  We also say that a  random set $A$ is a local set in the filtration  $(\F_C)$ if for all $C\in \CC(D)$, the event $\{ A \subseteq C \}$ is in $\F_C$. The filtration generated by a GFF $\Gamma$ (or the ``natural filtration'' of $\Gamma$) is the smallest one for which each $\Gamma_C$ is $\F_C$-measurable.

	Let us list a couple of simple facts about local sets, whose properties are immediate consequences of the definition (see Section 1.3 of\cite{JA}): 
	\begin{enumerate}[a)]
		\item If $A$ and $B$ are local with respect to the filtration $(\F_C)$, then $A\cup B$ is also local.
		\item If $(A_n)$ is a family of local sets with respect to the filtration $(\F_C)$, then  $\cap_n ( \overline {\cup_{m \ge n} A_m} )$ is also a local in the same filtration. 
		\item If $A$ is a local set and $\Gamma$ is a GFF adapted to $\F_\cdot$, then there exists a process $\Gamma_A$, such that it is a.s. harmonic in $D\backslash A$, and that conditionally on $(A,\Gamma_A)$, $\Gamma^A:=\Gamma-\Gamma_A$ is a GFF in $D\backslash A$.
	\end{enumerate}
	In the literature, having a coupling $(A,\Gamma)$ satisfying c) is usually used as the definition of local sets (see for instance \cite{SchSh2}). This property is equivalent to the existence of a filtration under which $A$ is a local set, and $\Gamma$ is a GFF. This can be done by defining $\F_C=\sigma(\Gamma_C,A\1_{A\subseteq C},\{A\subseteq C\})$ and using Lemma 3.9 of \cite{SchSh2} to see this satisfies the definitions. The definition it of local via filtration will be handy to show that the examples that we construct are indeed local sets.

	Note that we can represent the restriction of $\Gamma_A$ to $D\backslash A$ as a harmonic function $h_A$ in $D\backslash A$. In other words, there exists a harmonic function $h_A$ in the random domain $D \setminus A$ such that for all smooth function $f$, $(\Gamma_A,f)=\int h_A(z)f(z)dz$ on the event where the support of $f$ is contained in $D \backslash A$. 
	
	Additionally, it holds that when $A$ and $B$ are local sets,  a.s. for all $z$ such that the connected component of $D\backslash A$ containing $z$ is equal to the connected component of $D\backslash B$ containing $z$ we have that  $h_A(z)=h_B(z)$ (see Proposition 1.3.29 of \cite{JA} or \cite{WWln2}).

Let us already point out that local sets have to be big enough to provide any information about the GFF.
\begin{lema}\label{polar}
	Let $\Gamma$ be a GFF on a domain $D$ and $A$ a local set. Then, $\Gamma_A=0$ almost surely if only if $A$ is almost surely polar for Brownian motion on $D$.
\end{lema}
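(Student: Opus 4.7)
The plan is to relate the vanishing of $\Gamma_A$ to the classical probabilistic fact that the Green's functions $G_D$ and $G_{D\setminus A}$ agree exactly when $A$ is polar for Brownian motion in $D$. The key intermediate quantity is the variance of $(\Gamma_A,f)$ for a smooth compactly supported test function $f$, which I will compute in terms of $G_D-G_{D\setminus A}$.

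For this variance computation, by the Markov decomposition c) above, $\Gamma^A:=\Gamma-\Gamma_A$ is, conditionally on $A$, a GFF in $D\setminus A$ and independent of $\Gamma_A$. Both $(\Gamma,f)$ and $(\Gamma^A,f)$ are centered, hence so is $(\Gamma_A,f)$; and using the conditional independence together with $\E[(\Gamma^A,f)\mid A]=0$ one checks $\Cov((\Gamma_A,f),(\Gamma^A,f))=0$, so
\[ \Var((\Gamma,f)) = \Var((\Gamma_A,f))+\Var((\Gamma^A,f)). \]
Plugging in $\Var((\Gamma,f))=\iint_{D\times D}G_D(x,y)f(x)f(y)\,\d x\d y$ and $\Var((\Gamma^A,f))=\E[\iint_{D\times D}G_{D\setminus A}(x,y)f(x)f(y)\,\d x\d y]$ yields
\[ \Var((\Gamma_A,f)) = \E\!\left[\iint_{D\times D}\bigl(G_D(x,y)-G_{D\setminus A}(x,y)\bigr)f(x)f(y)\,\d x\d y\right]. \]

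Testing against a countable dense family of $f$'s, the centered distribution $\Gamma_A$ vanishes almost surely if and only if the above variance is zero for every such $f$; since $G_D\ge G_{D\setminus A}$ pointwise, this is equivalent to $G_D(x,y)=G_{D\setminus A}(x,y)$ for Lebesgue-a.e.\ $(x,y)\in D\times D$ and $\P$-almost surely in $A$. The strong Markov property for Brownian motion killed on $\partial D$ gives the representation
\[ G_D(x,y)-G_{D\setminus A}(x,y) = \E_x\bigl[G_D(B_{\tau_A},y)\1_{\{\tau_A<\tau_D\}}\bigr], \]
where $\tau_A$ is the hitting time of $A$ and $\tau_D$ the exit time from $D$. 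Since $G_D(\cdot,y)$ is strictly positive off the diagonal, this quantity vanishes for a.e.\ $(x,y)$ if and only if $\P_x(\tau_A<\tau_D)=0$ for a.e.\ $x\in D$, which is precisely the definition of $A$ being polar for Brownian motion in $D$. The only mild technicality I expect is a Fubini-type interchange to pass between "$\P$-a.s., a.e.\ in $(x,y)$" and "$A$ is polar $\P$-a.s."; given the monotonicity $G_{D\setminus A}\le G_D$ and the continuity of $G_D$ off the diagonal, this step is routine.
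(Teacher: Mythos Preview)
Your proof is correct and follows essentially the same approach as the paper: compute $\E[(\Gamma_A,f)^2]=\E[(\Gamma,f)^2]-\E[(\Gamma^A,f)^2]$ and use that $A$ is polar if and only if $G_D=G_{D\setminus A}$. The paper simply cites this last equivalence as a known fact, whereas you unpack it via the strong Markov representation $G_D-G_{D\setminus A}=\E_x[G_D(B_{\tau_A},y)\1_{\{\tau_A<\tau_D\}}]$; this extra detail is fine but not needed.
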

\begin{proof}
	Note that $A$ is polar if and only if $G^D=G^{D\backslash A}$. Then for all smooth function $f$ with bounded support,
	\begin{align*}
	\E\left[(\Gamma_A,f)^2 \right] =\E\left[(\Gamma,f)^2 \right] -\E\left[(\Gamma^A,f)^2 \right],
	\end{align*}
	Given that $G^{D\backslash A}\leq G^D$, we see that $A$ is polar if and only if the right hand side is equal to 0 for all such $f$.
\end{proof}
Recall that Kakutani's Theorem (Theorem 8.2 in \cite{MoPe}) states that one can check whether a set is polar by studying the decay oo the volume of small neighbourhoods of $A$. In particular, when $d \ge 3$, any local set with Minkowski dimension smaller than $d-2$ is polar for the BM, and it is, therefore, a local set with $\Gamma_A =0$. 
\subsection {A simple bound for Gaussian random variables.} To finish the preliminaries we show a simple Gaussian inequality wish will be used in the proof of statements of the type \hyperlink{1d}{(1)$_d$}.
\begin{lema}\label{inequality gaussian}
	There exists an absolute constant $W$ such that for  any centred Gaussian vector $(X,Y)$ and for all $A$ with $\P(A)\leq \frac{1}{8}$ we have that
	\begin{align}\label{GB}
	\E\left[XY \1_{A} \right]\leq W \max\{\var(Y),\var(x) \} \P(A)\log\left |\P(A)\right|.
	\end{align}
\end{lema}

\begin{proof}
	Using the fact that $2ab\leq a^2+b^2$, we can restrict ourselves to the case where $X=Y$, and by scaling it suffices to consider the case where $X$ is a standard normal variable.  Now, take $r>0$ such that $\P(|X|>r)= \P(A)$ and note that $\E\left[X^2(\1_{|X|>r}-\1_{A})\right]\geq 0$. Thus,
	\begin{align*}
	\E\left[X^2\1_A\right] 
	&\leq\frac{1}{\sqrt{2\pi }}\int_{r}^{\infty} x^{2} e^{\frac{-x^2}{2}}\leq r e^{\frac{-r^2}{2}}.
	\end{align*}
	
	Now, we just need to estimate the value of $re^{-\frac{r^2}{2}}$. To do this we use that for any $s>\frac{2}{\sqrt{2\pi}}$, (see \cite{Co}):
	\begin{align*}
	\frac{2}{\sqrt{2\pi}}\frac{s}{s^2+1}e^{-\frac{s^2}{2}} \leq \P(|X|>s) \leq e^{-\frac{s^2}{2}}.
	\end{align*}
	From the first inequality we get that
	\begin{align*}
	re^{-\frac{r^2}{2}}\leq \frac{\sqrt{\pi}}{\sqrt{2}}\P(|X|>r) (r^2+1),
	\end{align*}
	but from the second inequality we get that $r\leq \sqrt{2|\ln (\P(X>r))| }$. From where we conclude.
\end{proof}

\section{Examples of ``small'' non-thin local sets.}\label{hints}

In the present section, we  prove the statements \hyperlink{2d}{(2)$_d$}: We construct and describe the main features of a particular local set of the $d$-dimensional GFF in $d \ge 2$, which is not thin, yet rather small. 

\subsection {An example using CLE$_4$ in two dimensions}

Before we construct our actual examples, let us quickly describe how it is possible to use the coupling of the two-dimensional GFF with the conformal loop ensemble CLE$_4$ to construct a local set which implies the statement \hyperlink{2d}{(2)$_2$}. Because such a relationship is only known in dimension $2$, this construction can not be generalised to higher dimensions. However, it helps understanding some features of the example presented in the next subsection. 
Since this CLE$_4$-based construction is not used in our main proofs, we choose here not to give a complete review of the Miller-Sheffield coupling of the CLE$_4$ with the GFF in two dimensions, and we refer the reader to \cite {ASW2} for background and details.

Let $\Gamma$ be a GFF in a simply connected domain $D$. Recall that (see \cite{MS,ASW2}) it is possible to define deterministically from $\Gamma$ a local set $A_1$ of Minkowski dimension $15/8$ (see \cite{SchShW,NW}) such that the harmonic function $h_{A_1}$ (that we denote by $h_1$) is constant and equal to $\pm 2\lambda=\pm\sqrt {\pi /2}$ in each connected component of $D \setminus A_1$\footnote{The constant $2\lambda$ is called the height-gap of the GFF and it depends on the normalisation of the Green's function, and therefore of the GFF. Sometimes, other normalizations are used in the literature: If $G_D (x,y) \sim c \log(1/|x-y|)$ as $x \to y$, then $\lambda$ should be taken to be $(\pi/2)\times \sqrt {c}$. }. This set $A_1$ has the law of a CLE$_4$ carpet, and the coupling just described is usually called the natural coupling of CLE$_4$ with the GFF.

Furthermore, as explained in \cite {ASW2}, this local set is thin (in the present case, the definition of thin is the one given in the introduction because $h_{1}$ is integrable) and conditionally on $A_1$, the sign of $h_1$ is chosen to be $+$ or $-$ independently in each connected component of $D \setminus A_1$. 

Now, we define inductively an increasing family $A_n$ of local sets. Suppose that for a given $n\in \N$, we have defined a certain thin local set $A_n$ such that $h_n$ is constant in each connected component of $D \setminus A_n$ and takes values in  $\{-2 k\lambda:-1\leq k\in \Z\}$. We then define $A_{n+1}$ and $h_{n+1}$ as follows: 
\begin {itemize}
\item In the connected components of $D \setminus A_n$ where $h_n = 2 \lambda$ we do nothing: these connected components are still in $D \setminus A_{n+1}$ and $h_{n+1} = 2 \lambda$ there. 
\item In the other connected components, $O$, of $D \setminus A_n$, we construct the CLE$_4$ associated to the GFF $\Gamma^{A_n}$ restricted to $O$. The connected components of $D \setminus A_{n+1}\cap O$ are defined to be the complement of this CLE$_4$, and the values of the harmonic function are $h_{n+1} = h_n \pm 2 \lambda$.
\end {itemize}
We finally define our local set $A$ to be the closure of $\cup_n A_n$. 

As $A$ is the closure of the union of local sets, $A$ is also a local set. Furthermore, note that for every for every $x\in D$, $(2\lambda)^{-1}h_{A_n}(x)$ is a simple random walk stopped at $N_x$,the first time it hits $1$.  Because for every $x\in D$ a.s. $N_x<\infty$,  we have that the Lebesgue measure of $A$ is 0. Using the techniques of Proposition 20 of \cite{ASW2} or Proposition 4.6 of \cite{ALS} one could further show that $A$ satisfies \hyperlink{22}{(2)$_2$}.

 We can also see that $h_A$ is equal to $2 \lambda$ in each of the connected components of the complement of $A$. To do this it suffices to take a dense set $(x_n)_{n\in \N}$, and note that a.s. $h_{A}(x_n)=h_{A_{N_{x_n}}}(x_n)=2\lambda$.

Since $h_A = 2 \lambda$, the set $A$ can not be thin. Indeed, for any smooth non-negative test function $f$, the integral $\int_{D \setminus A} h_A(z) f(z) dz$ would be 
almost surely non-negative, and it can therefore not be the same random variable as $(\Gamma ,f)-(\Gamma^A,f)$ (unless $f=0$).

To finish this example, let us mention that we know much more about the size of $A$. In \cite{ALS}  it is shown that $A$ has non trivial Minkowski content of gauge $r\mapsto r^2/\sqrt{|\log(r)|}$, and that this Minkowski content is exactly the difference between $(h_A,1)$ and $(\Gamma_A,1)$.

\subsection {Another example in two dimensions} 

 In the present subsection, we first describe another local set $A$ of the two-dimensional GFF that has a simple generalisation when $d \ge 3$. One main feature is reminiscent of the previous case: we discover the GFF in a self-similar fashion and explore the GFF until its mean value in the dyadic square that we are currently looking at is likely to be positive in some sense that we will make precise. The main difference with the previous case is that we explore using boundaries of dyadic squares instead of a nested CLE$_4$, as the CLE technology is not available in higher dimensions.

\medbreak
\noindent
{\bf Notation.}
Choose the domain $D$ to be the unit square $(0,1)^2$. As we are going to use nested dyadic squares, it is useful to introduce the following notation. We define $S^\emptyset$ to be equal to $D$, and when $u$ is a finite sequence of $n$ elements of $\{ 1, \ldots , 4 \}$, then $S^{u1}, \ldots, S^{u4}$ are the four open dyadic subsquares of side-length $2^{-n-1}$ of $S^u$. We can, for instance, choose to associate the four indices respectively to the NW, NE, SW, SE subsquares. Thanks to this notation, we can associate to each square a point in the tree $\bigcup_{n}\{1,2,3,4\}^n$, and a genealogy.

Let us also define for each dyadic square $S^u$, the random variable $\gamma_n (S^u) := (\Gamma_{T_n}, \1_{S^u} )$, where $T_n$ is the union of elements in $\TT_n$. This is the conditional expectation of $(\Gamma, \1_{S^u} )$  in $S^u$, when one observes the GFF outside (i.e. on the boundary) of the ancestor of $S^u$ with height $n$ if $n\leq |u|$(the height of $u$), or the boundary of the children of $u$ with height $n$ if $n>|u|$ . It can also be viewed as $(\Gamma,\mu^u)$ where $\mu^u_n$ is a well-chosen measure supported on the boundary of the squares associated with $S^u$ with height $n$.

We are going to discover progressively and simultaneously the GFF along the four segments from $(1/2, 0)$, $(1, 1/2)$, $(1/2,1)$ and $(0,1/2)$ to the middle point $(1/2, 1/2)$ (see the first image of figure Figure \ref{A}). When we have finished, then the unit square is divided into the four squares $S^1, \ldots, S^4$ of side-length $1/2$. During this discovery, we can choose a modification of the conditional expectation of the random variable $(\Gamma,1)$, given the discovered values of the GFF in the discovered segments, so that it evolves like a continuous martingale. Thus, we can parametrise time in a way such that this conditional expectation has the law of a Brownian motion\footnote{One could show that $B$ is continuous even though it is not needed in the paper. To do this one may use the fact that the trace of the field in a line can be seen as a distribution in $\H^{-1}$ (see Section 4.3.2 of \cite{Dub})} $B = B^\emptyset$ stopped at a time $T$.

 Let us note that the given change of time is not random. To prove this take a deterministic way symmetrically growing the segments $l(t)$. The weak Markov property implies that for any $t\geq0$,  $(\Gamma,1)$ is the sum of $(\Gamma_{l(t)},1)$ and $(\Gamma^{l(t)},1)$. Also, let us note that $(\Gamma,1)$ and $(\Gamma^{l(t)},1)$ are a centred Gaussian with variance $\iint_{D\times D} G_D(x,y)dxdy$ and $\iint_{D\times D} G_{D\backslash l(t)}(x,y)dxdy)$ respectively. Furthermore, as $\Gamma_{l(t)}$ is independent of $\Gamma^{l(t)}$, we have that $(\Gamma_{l(t)},1)$ is distributed as a centred Gaussian random variable with variance $\sigma_t=\iint_{D\times D} (G_D(x,y)-G_{D\backslash l(t)}(x,y))dxdy$. Thus, it suffices to take $l(t)$ such that $\sigma_t=t$. Let us note that this discussion also implies that $T$ is equal to $\iint_{D\times D} (G_D(x,y)-G_{D\backslash T_1}(x,y))dxdy$.

\begin{figure}[ht!]
	\begin{subfigure}{0.4\textwidth}
		\includegraphics[width=0.9\linewidth ]{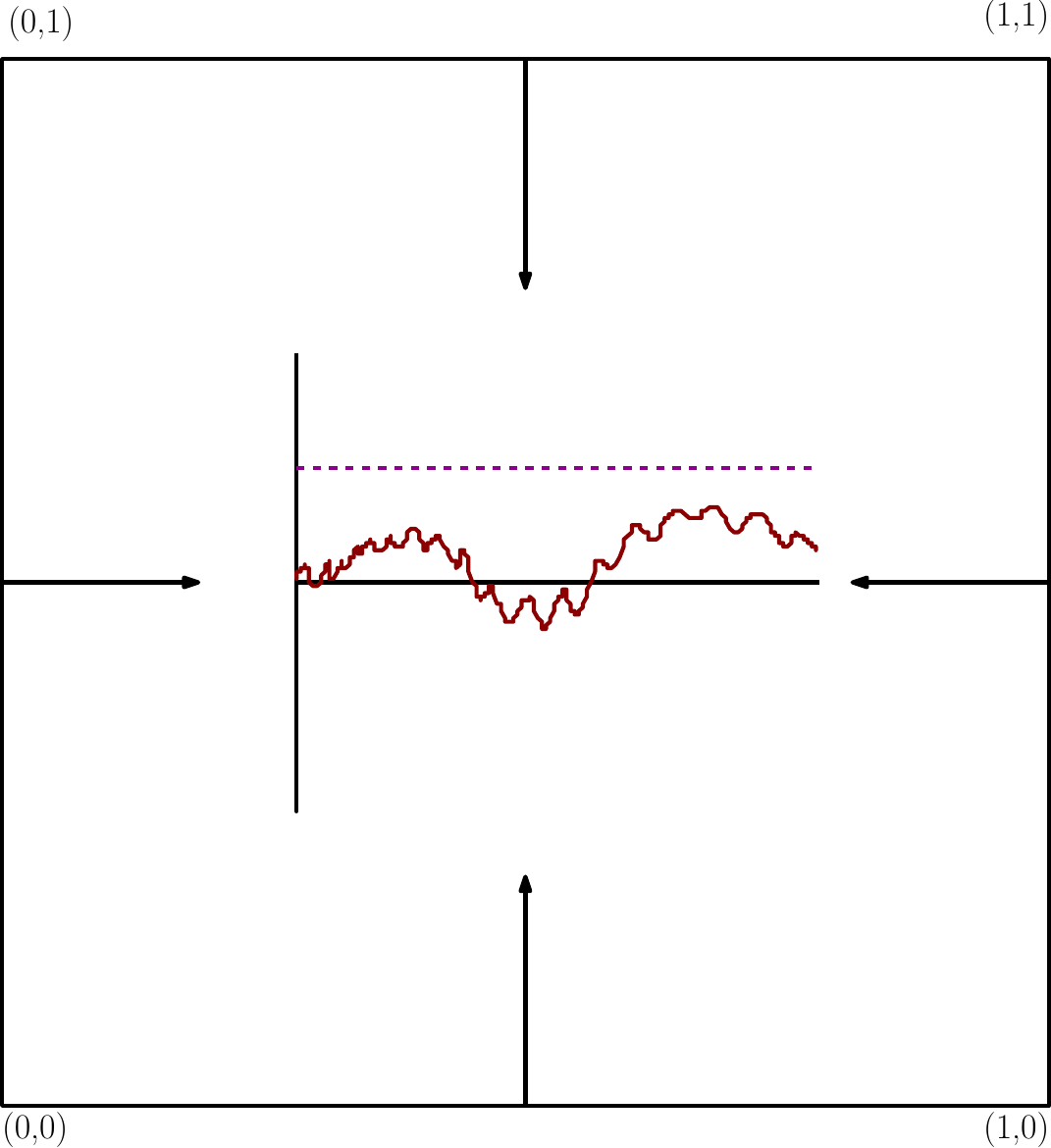}	
	\end{subfigure}
	\begin{subfigure}{0.4\textwidth}
		\includegraphics[width=0.9\linewidth]{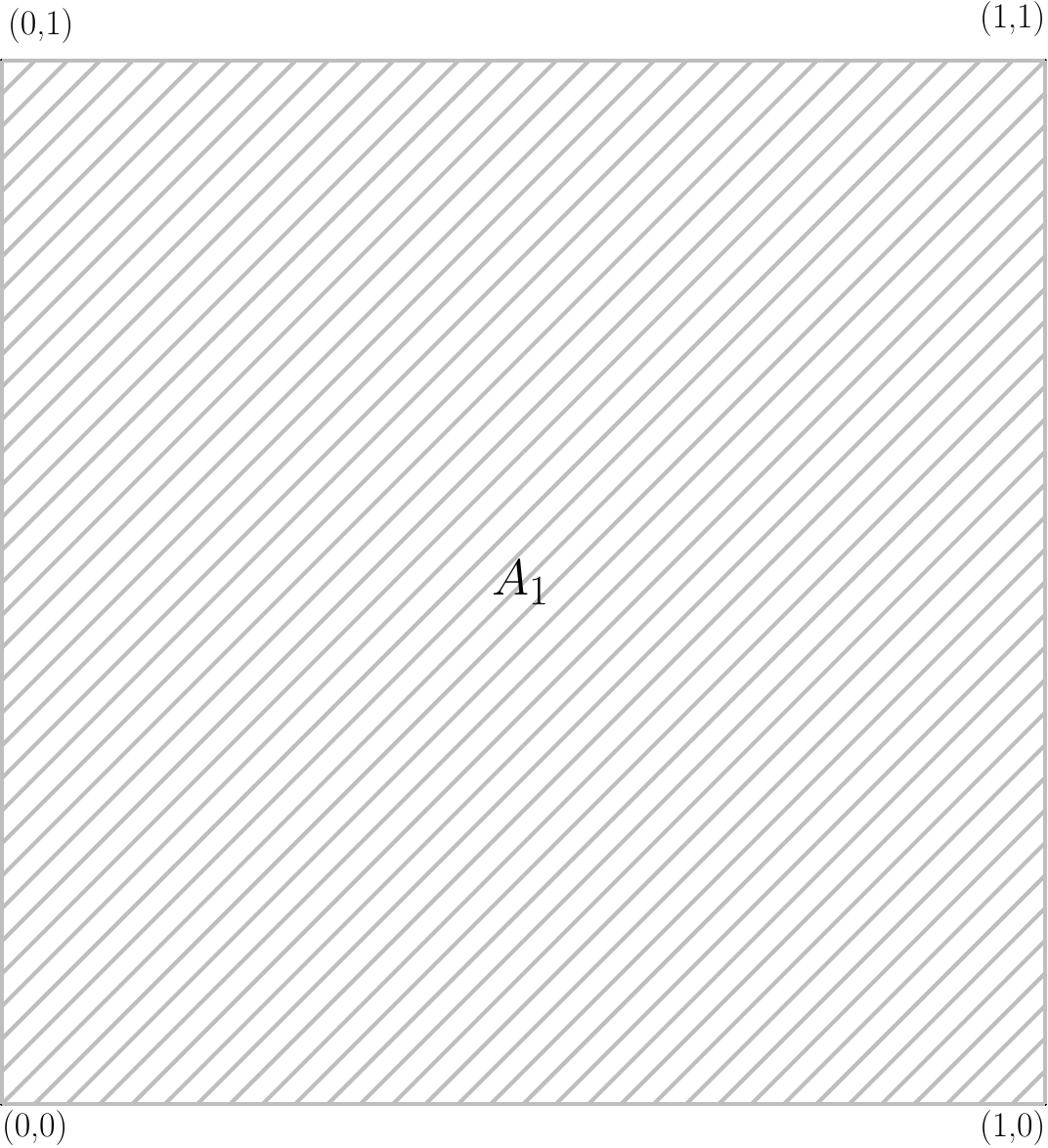}	
	\end{subfigure}
	\begin{subfigure}{0.4\textwidth}
		\includegraphics[width=0.9\linewidth ]{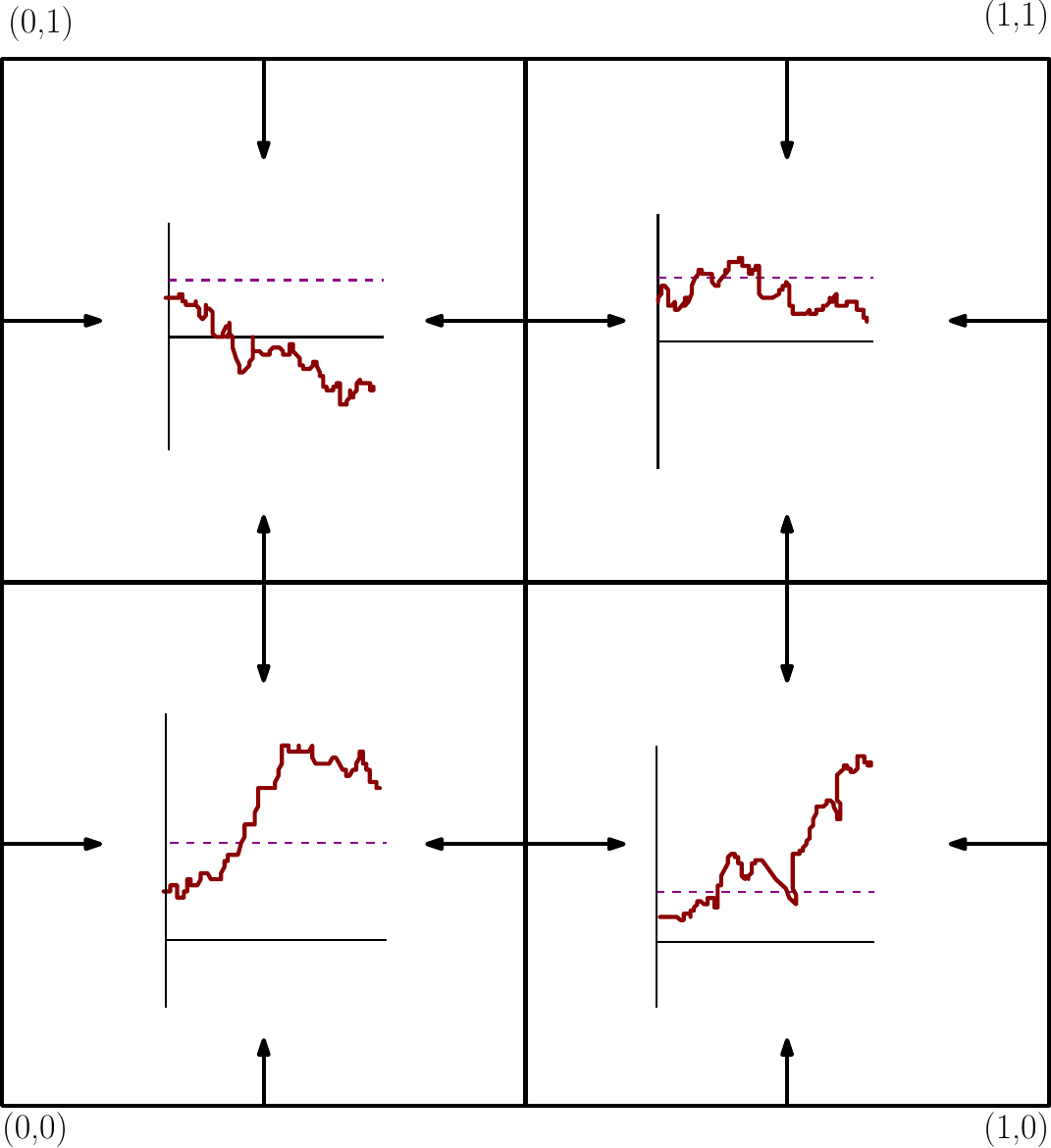}	
	\end{subfigure}
	\begin{subfigure}{0.4\textwidth}
		\includegraphics[width=0.9\linewidth]{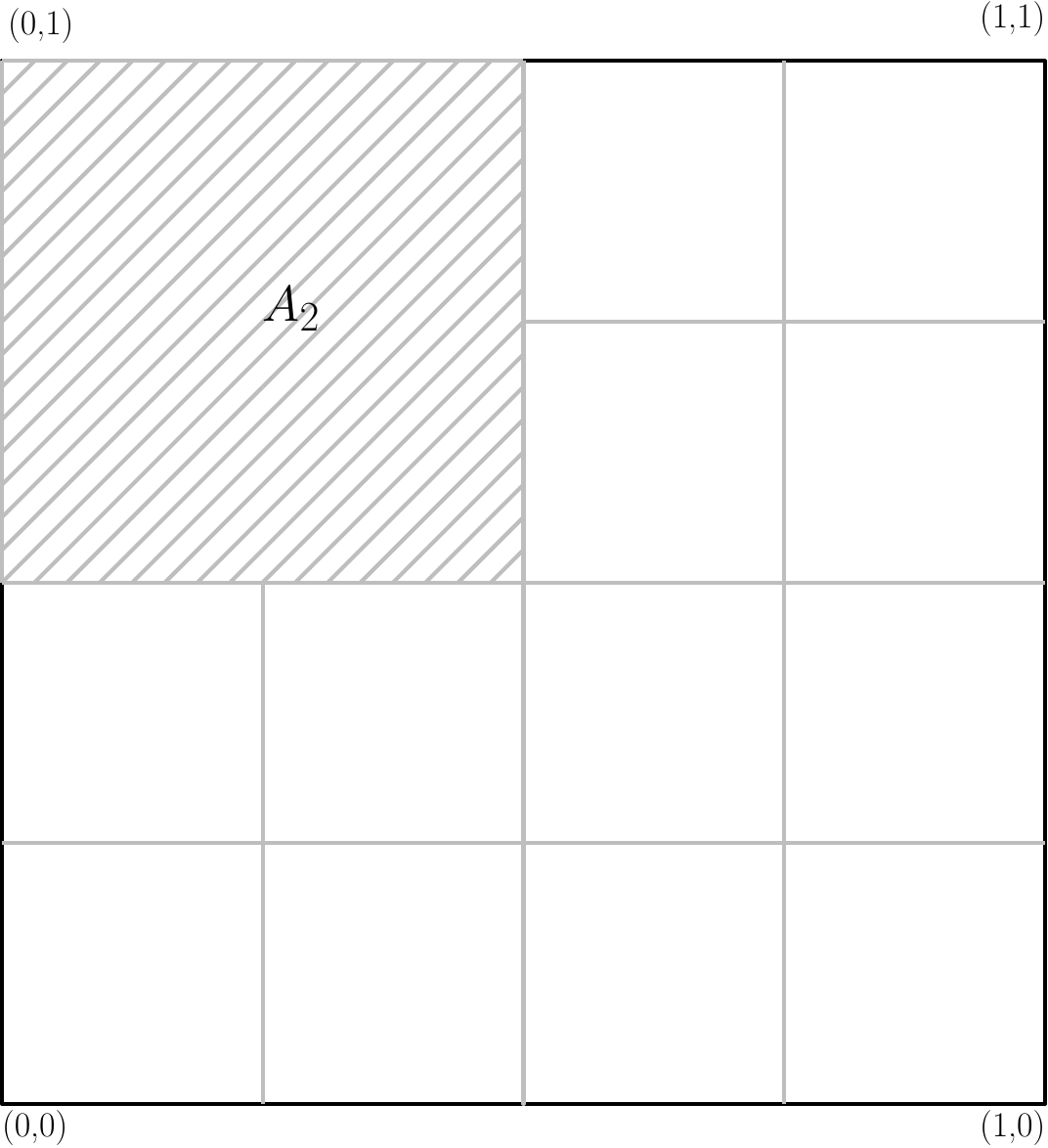}	
	\end{subfigure}	
	\caption{First two steps in the construction of $A$. In the left pictures we represent the Brownian motion associated to each point. In the right figure, the grey areas represents $A_n$.}
	\label{A}
\end{figure}

\medbreak
\noindent
{\bf Definition of $A$.}
If $B$ hits $1$ before time $T$, we define $A^1$ to be equal to the union of these four segments at the end-time $T$ of this exploration, so that $U_1 := S^\emptyset \setminus A^1 = S^1 \cup \ldots \cup S^4$. If not we take $A^1=D$. Note that $\E [ (\Gamma,1) | \sup_{t \le T} B_t  \ge 1]  = 1$. 

If the Brownian motion has not reached $1$ before time $T$, we continue exploring, and we do this independently and simultaneously in all four squares $S^1, \ldots, S^4$ 
using the GFF $\Gamma^{A^1}$ in each of them (note that $\Gamma^{A^1}$ consists of four independent GFFs in the four squares). 
In each of these squares, we grow four boundary segments towards the center of the square, and we study the conditional expectation of $4(\Gamma^{A^1},\1_{S^j})$ (the mean of the mass of $\Gamma^{A^1}$ in $S^j$)  given what one has discovered. By self-similarity, each of these 
four quantities evolve like four independent Brownian motions $B^1, \ldots , B^4$ up to time $T$. 

Now, in order to define $A^2$ we have two cases: if $A^1\neq D$, then $A^2= A^1$. If not, we look, for each $S^i$, at whether the BM $W^{i}:=B(t\wedge T) + B^i(t-T)\I{t\geq T}$ hits level $1$ before time $2T$ or not.  $A^2$ is made by the closed union of all the squares of size $2^{-1}$ where this $BM$ did not hit the level 1 before time $2T$, with the boundaries of all the squares of the same size where this event happen (see Figure \ref{A}). In other words, for each $n \ge 1$: 
\begin {itemize}
 \item The sets $A^n$ and $\partial A^n$ are local sets made out of the union of $2^{-n}$ dyadic segments with elements of $\SS_n$, and $A^n$ is such that $(A^n)_n=A^n$. We say that a square $s\in \SS_n$ is still active (meaning that we will continue exploring inside it) when $s\in A^n$.  Furthermore, active squares also come equipped with a Brownian motion $W^s$ stopped at time $Tn$. We call $K_n$ the set of active squares in $\SS_n$ and $V_n$ the set of  connected components of $D\backslash A^n$, i.e., the inactive components. Note that $V_n\subseteq \bigcup_{k=1}^n \SS_n$. 
\item 
In order to construct $A^{n+1}$ and to continue $W$, we proceed as follows:  The components that were not active at step $n$ remain inactive.  For $s\in K_n$, continuously grow the middle lines as done in the first step. Now, define for $0\leq t \leq n(T+1)$ and $s^+$ any direct descendent of $s$, $W^{s^+}(t):=W^{s}(t\wedge nT)+B^{s}(t-nT)\I{t	\geq nT}$ , where $B^{s}$ is the BM associated with the change of the conditional expectation of $2^{n}(\Gamma^{A^{n}},1_{s})$ given the increasing procedure in $s$. We keep active those squares $s^+$ where its associated BM did not hit $1$ before time $(n+1)T$, and we make $s^+$ inactive (i.e. $s^+\in V_{m}$ for $m\geq n+1$) if its associated BM hit $1$ before time $(n+1)T$. We define $A^{n+1}$ as the closed union of all the active squares at time $(n+1)$ with the boundary of the inactive squares. We can also see it as $A^n$ minus the squares $s^+$ that became inactive in this step.
\end {itemize}

Note that $A^n$ is non-increasing and that the family $V_n$ is non-decreasing. We define $A$ to be the intersection of all $A^n$. The complement of $A$ is then just the union of the squares that stop being active at some point, more precisely, $D\backslash A$ is the disjoint union of the squares in $\cup_n V_n$. Thus, we have that $A_n=A^n$. Note that for a given dyadic square $s$, on the even that $s \in V_n$,  the harmonic function $h_A$ coincides with the harmonic function $h_{D \setminus T_n}$ on $s$ (where 
$T_n$ the union of all boundaries of $2^{-n}$-dyadic squares) and that $(\Gamma_A,1_s) = \gamma_n (s)$. 

\medbreak
\noindent
{\bf The set $A$ is not large.} 
It follows from the construction that the probability that a given dyadic square $s$ of side-length $2^{-n}$ is still active at step $n$ is equal to the probability that a one-dimensional Brownian motion did not hit $1$ before time $n T$. This probability decays like a constant times $1/ \sqrt {n}$ as $n  \to \infty$. From this, it follows readily that the size of $A$ is indeed of the type required for \hyperlink{22}{(2)$_2$}.
\begin {prop}
\label {22}
The expected value of the area of the $\eps$-neighbourhood of $A$ decays almost surely like  $O(|\log \eps|^{-1/2})$.
\end {prop}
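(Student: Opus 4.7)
The main input will be a standard one-dimensional Brownian motion estimate: by the reflection principle,
$$
P\Bigl(\sup_{0\le t\le nT} W_t < 1\Bigr) \;=\; 1-2\,P(W_{nT}\ge 1) \;\sim\; \sqrt{\frac{2}{\pi nT}}
\qquad (n\to\infty).
$$
By construction, each $s\in\SS_n$ is endowed with its own Brownian motion $W^s$, and $s$ is still active at step $n$ if and only if $W^s$ has not reached level $1$ by time $nT$.

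First I would use this to bound the expected area of the active part of $A^n$. Since $\SS_n$ has at most $2^{2n}$ elements, each of area $2^{-2n}$, linearity of expectation gives
$$
\E\bigl[\mathrm{Area}(\text{active squares at step } n)\bigr] \;=\; 2^{2n}\cdot 2^{-2n}\cdot O(1/\sqrt{n}) \;=\; O(1/\sqrt{n}).
$$
For $\eps\in[2^{-n},2^{-n+1})$ I would then use $A\subseteq A^n$ to get $N_\eps(A)\subseteq N_\eps(A^n)$, and decompose $A^n$ into (i) the closures of active squares at step $n$ and (ii) the boundaries of all inactive dyadic squares at levels $\le n$. Since $\eps<2^{-n+1}$, the $\eps$-fattening of any single active $2^{-n}$-square is contained in a square of side at most $3\cdot 2^{-n}$, so the expected contribution from (i) is bounded by $9\cdot 2^{-2n}\cdot\E[\#\text{active squares}] = O(1/\sqrt{n})$.

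The contribution from (ii) requires a slightly finer estimate but will turn out to be of lower order. A square $s\in\SS_k$ becomes inactive exactly at step $k$, which happens precisely when $W^s$ first reaches $1$ during $((k-1)T, kT]$; from the explicit first-passage density of Brownian motion this has probability $O(k^{-3/2})$. Summing over $\SS_k$ yields expected total perimeter $O(2^{2k}\cdot k^{-3/2}\cdot 2^{-k})=O(2^k/k^{3/2})$, so its $\eps$-fattening contributes expected area $O(\eps\cdot 2^k/k^{3/2})$. Taking $\eps=2^{-n}$ and summing over $k=1,\ldots,n$ gives a geometric sum dominated by its last term, namely $O(n^{-3/2})$, which is negligible compared with (i). Combining (i) and (ii) yields $\E[\mathrm{Area}(N_\eps(A))]=O(n^{-1/2})=O(|\log\eps|^{-1/2})$, as claimed. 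There is no real conceptual obstacle beyond careful bookkeeping across dyadic levels; the key observation that makes (ii) harmless is that the potentially large geometric factor $2^k$ at level $k$ is killed by the small-scale factor $\eps=2^{-n}$ appearing in the fattening operation.
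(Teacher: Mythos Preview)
Your proof is correct and follows essentially the same approach as the paper. Both arguments decompose the neighborhood of $A$ into the contribution from the active squares at level $n$ (governed by $\P(\sup_{t\le nT} B_t<1)\sim c/\sqrt{n}$) and the contribution from the boundaries of squares that became inactive at earlier levels $k<n$ (governed by $\P(\text{BM first hits $1$ in }((k-1)T,kT])=O(k^{-3/2})$), showing the latter is of lower order; the only cosmetic difference is that the paper bounds $\E[N_n]$ (the expected number of $2^{-n}$-dyadic squares meeting $A$) rather than the expected area of the $\eps$-neighborhood directly.
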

\begin{proof}
	Indeed, if $N_n = N_n (A)$ denotes the number of closed $2^{-n}$ dyadic squares that intersect $A$, then 
\begin{align*}
\E\left[N_n \right]&=\sum_{s\in \SS_n}\E\left[\I{s\subseteq A_n}\right ]+C\sum_{j=1}^{n-1}\sum_{s\in \SS_j}2^{n-j}\E\left[\I{s\subseteq A_j\backslash A_{j+1}}\right ]\\
&\leq 4^{n} \P\left (\text{BM does not hit $1$ before } Tn \right)+C2^{n}\sum_{j=1}^{n-1}j^{-3/2}2^{-j}
\leq C \frac{4^{n}}{\sqrt{n}}
\end{align*}
(mind that in $N_n$, we have also to count the squares that intersect the boundaries of squares that have stopped being active, which explains the sum in $j$). \end{proof}

\medbreak
\noindent
{\bf A first moment estimate.} 
Note that to define the set $A$, we have in fact associated a Branching Brownian motion (BBM) $W$ to each GFF, where each BM splits into $4$ independently evolving BM at each time which is a multiple of  $T$.  However, it should be emphasised that for a given dyadic square $s$ of side-length 
$2^{-n}$, the value of the corresponding Brownian motion at time $nT$ is not equal to the expected mean height of the GFF in $s$ given the exploration up to the $n$-th generation. Indeed, this mean height has a higher value when $s$ is towards the centre of $D$ than when it is near its boundary. This phenomenon is not mirrored by  the Branching Brownian motion description. However, a key observation is that this difference is averaged out when summing over all squares. For instance, it is easy to check by induction on $n$ that
\[ \sum_{s \in {\mathcal S}_n} \gamma_n (s)  = \sum_{s  \in {\mathcal S}_n} 4^{-n} W^{s} (nT),\]
where $W^s$ denotes the Brownian motion that follows the branch of the BBM corresponding to $s$. 

The variant of this result that is useful for us is the following.
\begin {lema}
\label {l1}
 $$  \E\left [  (\Gamma_{A},\1_{D\backslash A_n})  \right ] = \E \left [ \sum_{s \in V_n} \hbox {Area}(s) \right ].$$ 
\end {lema}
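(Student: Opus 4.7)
The proof starts with the decomposition: since $D \setminus A_n$ is, up to measure zero, the disjoint union of the open squares $s \in V_n$, we have
\[
(\Gamma_A, \1_{D \setminus A_n}) = \sum_{s \in V_n} (\Gamma_A, \1_s).
\]
For each dyadic square $s$, the event $\{s \in V_n\}$ is measurable with respect to the $\sigma$-field $\F_{|s|}$ generated by the exploration up to step $|s|$; on this event, $s$ is a full connected component of $D \setminus A$, so $\Gamma^A$ restricted to $s$ is a centered zero-boundary GFF on $s$. Consequently $\E[(\Gamma^A, \1_s)\1_{s \in V_n}] = 0$, which yields
\[
\E[(\Gamma_A, \1_s) \1_{s \in V_n}] = \E[(\Gamma, \1_s) \1_{s \in V_n}].
\]

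To compute the right-hand side, fix $s$ and observe that the marginal law of the BBM-particle $W^s$ is that of a standard Brownian motion started at $0$, and that (modulo $s$'s ancestry being alive) the event $\{s \in V_n\} = \{s \in V_{|s|}\}$ coincides with $\{W^s \text{ hits } 1 \text{ by time } |s|T\}$. The symmetry of the cross exploration under permutations of the four children of each parent allows one to upgrade the sum identity $\sum_{s' \in \SS_n} \gamma_n(s') = \sum_{s'\in \SS_n} 4^{-n} W^{s'}(nT)$ stated just above the lemma into the pathwise identity
\[
\E[(\Gamma, \1_s) \mid W^s] = \hbox{Area}(s) \cdot W^s(|s|T),
\]
by an induction on $|s|$ that averages out the sibling BMs at each generation. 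The optional stopping theorem applied to the bounded stopping time $\sigma := \inf\{t \leq |s|T : W^s(t) = 1\} \wedge |s|T$ of the martingale $W^s$ yields $\E[W^s(\sigma)] = 0$, and decomposing on whether the hitting occurs before $|s|T$ gives $\E[W^s(|s|T)\1_{s \in V_n}] = \P(s \in V_n)$. Combining the two displays,
\[
\E[(\Gamma, \1_s)\1_{s \in V_n}] = \hbox{Area}(s) \cdot \P(s \in V_n),
\]
and summing over all dyadic $s$ establishes the claim.

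The main obstacle is justifying the pathwise identity $\E[(\Gamma, \1_s) \mid W^s] = \hbox{Area}(s) \cdot W^s(|s|T)$: the individual conditional means $(\Gamma_{A^{|s|}}, \1_s)$ do not coincide with $\hbox{Area}(s) \cdot W^s(|s|T)$ along a path (only in sum over $\SS_n$, as noted in the text preceding the lemma). The induction must carefully exploit, at each generation, the exchangeability of the four children of a common parent under the exploration law together with the martingale property of $k \mapsto (\Gamma_{A^k}, \1_s)$, so that after marginalizing out the asymmetric positional information the BM $W^s$ carries exactly the relevant summarized mean.
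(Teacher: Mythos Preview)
Your opening steps are fine: the decomposition $(\Gamma_A, \1_{D\setminus A_n}) = \sum_{s\in V_n}(\Gamma_A, \1_s)$ and the reduction to $\E[(\Gamma,\1_s)\1_{s\in V_n}]$ via $\E[(\Gamma^A,\1_s)\1_{s\in V_n}]=0$ are both correct. The gap is exactly where you flag it: the ``pathwise identity'' $\E[(\Gamma,\1_s)\mid W^s]=\mathrm{Area}(s)\,W^s(|s|T)$ is \emph{false} for individual squares, and the induction you sketch cannot establish it.

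The failure is concrete already at $|s|=2$. Take the parent $S^1$ (the NW quarter of $D$). Two sides of $\partial S^1$ lie on $\partial D$, where the field vanishes, and two lie on the first cross $T_1$. Hence the harmonic part $h_{T_1}$ is systematically smaller in modulus near the outer child $S^{11}$ (the corner of $D$) than near the inner child $S^{14}$ (near the centre). Conditioning on $B$ alone does not restore symmetry between these children: the only isometry of $D$ that fixes $S^1$ is the identity, so the four children of $S^1$ are not exchangeable under the conditional law of $\Gamma$ given $W^{S^1}=B$. Thus $\E[(\Gamma_{T_1},\1_{S^{11}})\mid B]\neq\tfrac1{16}B(T)$ in general, and your induction breaks at the first step beyond level one. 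The ``exchangeability of the four children under the exploration law'' that you invoke is a statement about the marginal law of $W$ (which is true), not about the joint law of $(\Gamma,W)$ (which is what you would need, and which is false).

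The paper's proof avoids this by never isolating a single square. It passes to the complement, writing $\E[(\Gamma_A,\1_{D\setminus A_n})]=-\E\bigl[\sum_{s\in K_n}\gamma_n(s)\bigr]$, and then shows $\E\bigl[\sum_{s\in K_n}\gamma_n(s)\bigr]=\E\bigl[\sum_{s\in K_n}4^{-n}W^s(nT)\bigr]$. The crucial point is that for a level-$m$ square $s'$, the conditional probability $\E[\1_{s\in K_n}\mid\Gamma_{T_{m+1}}]$ is the \emph{same} for every level-$n$ descendant $s$ of $s'$; this is what permits replacing the $\gamma$-increments by the $W$-increments after summing over all such descendants, with no need for a per-square symmetry. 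If you sum your expression over $s$ first and use this observation, the argument becomes essentially the paper's.
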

The right-hand side is equal to the probability that a Brownian motion started from $0$ hits $1$ before time $nT$, which converges to $1$. 
This shows already that  $(\Gamma_{A},\1_{D\backslash A_n})$ can not converge in $L^1$ to $(\Gamma_A, 1)$, which is a symmetric random variable with mean $0$. 

\begin {proof}
Note that $D\backslash A_n=\bigcup_{s\in V_n} s$ and that at time $n$, $\Gamma_{\partial A_n}=\Gamma_A$ in all elements of $V_n$ and $\Gamma_{\partial A_n}=\Gamma_{T_n}$ in all of those in $K_n$. This implies that $\E\left [  (\Gamma_{A},\1_{D\backslash A_n})  \right ]=-\E\left[\sum_{s\in K_n} \gamma_n(s) \right]$. Then, it is enough to prove that 
\begin{align*}
\E\left [ \sum_{s\in K_n} \gamma_n(s)  \right ]= 4^{-n}\E\left[ \sum_{s\in K_n} W^s \right]=-\E\left[\sum_{s\in V_n} Area(s) \right].
\end{align*}

The second equality just follows from the optional stopping  theorem. For the first equality we have to work harder. Take $m\leq n\in \N$ and fix $s'\in \SS_m$, we have that for all  $s \in \SS_n$ with ancestor $s'$,  $W^{s}((m+1)T)-W^s(mT)$ is equal to $4^{m}(\gamma_{m+1}(s')-\gamma_{m}(s'))$ and that $\E\left[ \I{s\in K_n }\mid \Gamma_{T_{m+1}}\right]$ does not depend on $s$. Now, let us show that the increment of the harmonic function for $s\in K_n$ at level $m$ can be computed using the Brownian motion,
\begin{align*}
\sum_{\substack{s\in \SS_n:s' \leq s}}\E\left[(\gamma_{m+1}-\gamma_{m})(s)\I{s\in K_n }\right]&=\sum_{\substack{s\in \SS_n:s' \leq s}}\E\left[(\gamma_{m+1}-\gamma_{m})(s)\E\left[ \I{s\in K_n }\mid \Gamma_{T_{m+1}}\right]\right]\\
&=4^{m-n} \E\left[(\gamma_{m+1}-\gamma_{m})(s')\E\left[ \I{s\in K_n }\mid \Gamma_{T_{m+1}}\right]\right]\\
&=4^{-n}\sum_{\substack{s\in \SS_n:s' \leq s}} \E\left[(W^{s}((m+1)T)-W^s(mT))\I{s\in K_n } \right] .
\end{align*}
We conclude by writing a $\sum_{s\in K_n}\gamma_n(s)$ as a telescopic sum.
\end{proof}

\medbreak 
\noindent
{\bf This set $A$ is not thin.}
Our goal is now to derive the following fact, which combined with Proposition \ref {22} proves the statement \hyperlink{22}{(2)$_2$}:  
\begin {prop}
The local set $A$ is not thin. 
\end {prop}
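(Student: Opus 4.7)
The plan is to derive a contradiction from the thinness of $A$ by testing against the constant function $f \equiv 1$. Setting $Z_n := (\Gamma_A, \1_{D\setminus A_n})$ and $Z := (\Gamma_A, 1)$, thinness would force $Z_n \to Z$ in probability. I will establish (i) $\E[Z_n] \to 1$, (ii) $\E[Z] = 0$, and (iii) uniform integrability of $(Z_n)$. Together these give a contradiction, since convergence in probability combined with uniform integrability forces convergence of expectations, yielding $1 = 0$.

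Point (i) is immediate from Lemma \ref{l1}: one has $\E[Z_n] = \E[\sum_{s \in V_n} \text{Area}(s)] = 1 - \E[\text{Area}(A_n)]$, and $\E[\text{Area}(A_n)]$ equals the probability that a standard Brownian motion has not hit $1$ by time $nT$, which tends to $0$. For point (ii), using the decomposition $\Gamma = \Gamma_A + \Gamma^A$ and conditioning on $A$ (so that $\Gamma^A\mid A$ is a centered GFF in $D \setminus A$), both $\E[(\Gamma, 1)]$ and $\E[(\Gamma^A, 1)]$ vanish, giving $\E[Z] = 0$.

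The main obstacle is establishing (iii), for which I would prove the stronger statement $\sup_n \E[Z_n^2] < \infty$. Writing $Z_n = (\Gamma, \1_{D\setminus A_n}) - (\Gamma^A, \1_{D\setminus A_n})$, the second summand is, conditional on $A$, centered Gaussian with variance $\int_{(D\setminus A_n)^2} G_{D\setminus A}(x,y)\,dx\,dy \leq \int_{D^2} G_D(x,y)\,dx\,dy < \infty$. For the first summand, I would exploit the branching Brownian motion description underlying the construction: the identity $\sum_{s \in \SS_n} \gamma_n(s) = 4^{-n} \sum_{s \in \SS_n} W^s(nT)$, together with the BBM covariance structure (distinct level-$n$ squares share BM history only up to their deepest common ancestor at some level $j$, giving covariance $jT$), yields a uniform bound on $\Var[(\Gamma_{T_n}, 1)]$ of order $T \cdot \sum_{j \geq 0} j \cdot 4^{-j}$; a similar pairwise argument extends this to the partial sum relevant to $Z_n$.

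The technical difficulty I anticipate lies precisely in this second-moment extension. Lemma \ref{l1} was a first-moment computation exploiting that the conditional expectation of $\I{s \in K_n}$ given $\F_{T_{m+1}}$ is constant across descendants of a common level-$m$ ancestor, which made the symmetrization step clean. The second-moment analogue requires summing over pairs $(s, s')$ while tracking both their joint inclusion events and the covariance $\Cov(W^s(nT), W^{s'}(nT))$ simultaneously; this is routine BBM bookkeeping, but it must be carried out so that the resulting bound is uniform in $n$, and in particular so that the potentially large contribution of pairs with shallow common ancestors is controlled by the $4^{-j}$ weighting coming from the depth.
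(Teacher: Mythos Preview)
Your overall strategy coincides with the paper's: reduce non-thinness to showing that $Z_n=(\Gamma_A,\1_{D\setminus A_n})$ is bounded in $L^2$ (hence uniformly integrable), so that convergence in probability would force convergence of expectations, contradicting $\E[Z_n]\to 1$ versus $\E[(\Gamma_A,1)]=0$. The paper states this reduction as Claim~\ref{l3} and argues exactly as you do.

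Your decomposition $Z_n=(\Gamma,\1_{D\setminus A_n})-(\Gamma^A,\1_{D\setminus A_n})$ differs cosmetically from the paper's, which instead writes $Z_n=(\Gamma_A,1)-\sum_{s\in K_n}\gamma_n(s)$ and reduces directly to bounding $\E\bigl[(\sum_{s\in K_n}\gamma_n(s))^2\bigr]$. After you peel off $(\Gamma,1)$ and the easily-controlled $(\Gamma^{T_n},\1_{A_n})$ piece, your route lands on the same sum, so the two approaches are effectively identical.

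Where you should be more careful is the phrase ``routine BBM bookkeeping'' for the partial sum. The identity $\sum_{s\in\SS_n}\gamma_n(s)=4^{-n}\sum_{s\in\SS_n}W^s(nT)$ holds only for the \emph{full} sum; for individual squares $\gamma_n(s)\neq 4^{-n}W^s(nT)$, so one cannot simply plug in $\Cov(W^s,W^{s'})$ for the restricted sum over $K_n$. The paper handles this by telescoping $\gamma_n(s)=\sum_m(\gamma_{m+1}-\gamma_m)(s)$, conditioning on $\Gamma_{T_p}$ at the level $p$ of the common ancestor $w=s\wedge s'$, and splitting into three cases (both increment levels $\geq p$, both $<p$, and the cross term). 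The symmetrization trick from Lemma~\ref{l1} converts the first case back to $W$-increments, but the second and third cases require in addition the Gaussian bound $\E[XY\1_E]\leq C\max(\Var X,\Var Y)\,\P(E)\log(1/\P(E))$, which produces the factor $\sqrt{p}\log p$ and the summable estimate $\sum_p 4^{-p}\sqrt{p}\log p<\infty$. This is the step that goes beyond pure BBM covariance counting, and it is worth flagging rather than absorbing into ``routine''.
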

This is a direct consequence of the following claim:
\begin {claim}
\label {l3}
The sequence of random variables $(\Gamma_A,\1_{D\backslash A_n})$ is bounded in $L^2$.
\end {claim}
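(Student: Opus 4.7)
The goal is to show $\sup_n \E[X_n^2] < \infty$, where $X_n = (\Gamma_A, \1_{D\backslash A_n}) = \sum_{s \in V_n} \gamma_n(s)$. My approach is to reduce the second moment of $X_n$ to a Green's function integral via the orthogonal decomposition of the GFF at the random local set $A_n$, and then exploit the integrability of $G_D$ in dimension two.

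\emph{Step 1 (identification).} First I would establish that $X_n = (\Gamma_{A_n}, \1_{D\backslash A_n})$. Indeed, for each inactive square $s \in V_n$, the boundary $\partial s$ belongs to $A \subset A_n$, so the connected component of $D \backslash A$ containing $s$ equals that of $D \backslash A_n$ containing $s$ (both equal the interior of $s$). By the compatibility of harmonic functions associated to nested local sets, $h_A = h_{A_n}$ on $s$; summing over $s \in V_n$ yields the identification.

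\emph{Step 2 (conditional Pythagorean identity).} Write $\Gamma = \Gamma_{A_n} + \Gamma^{A_n}$ and set $\mathcal{G}_n := \sigma(A_n, \Gamma_{A_n})$. Since $X_n$ is $\mathcal{G}_n$-measurable, and $\Gamma^{A_n}$ is, conditionally on $\mathcal{G}_n$, a mean-zero GFF in $D \backslash A_n$, the cross term vanishes and we obtain
\[
\E\!\left[(\Gamma, \1_{D\backslash A_n})^2 \,\big|\, \mathcal{G}_n\right] = X_n^2 + \iint_{(D\backslash A_n)^2} G_{D\backslash A_n}(x,y)\,dx\,dy.
\]
Taking expectation and rearranging,
\[
\E[X_n^2] = \E\!\left[(\Gamma, \1_{D\backslash A_n})^2\right] - \E\!\left[\iint_{(D\backslash A_n)^2} G_{D\backslash A_n}(x,y)\,dx\,dy\right].
\]

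\emph{Step 3 (closing the bound).} It suffices to show $\E[(\Gamma, \1_{D\backslash A_n})^2] \leq \iint_{D^2} G_D(x,y)\,dx\,dy$, which is finite in dimension $2$ since $G_D$ has only a logarithmic singularity. Dropping the nonnegative Green's function term from the identity above then yields the uniform bound $\E[X_n^2] \leq \iint_{D^2} G_D(x,y)\,dx\,dy$.

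\emph{Main obstacle.} The delicate point is step 3: since $A_n$ depends on $\Gamma$ through the BBM exploration, the second moment $\E[(\Gamma, \1_{D\backslash A_n})^2]$ is not immediately the standard Gaussian variance $\iint_{(D\backslash A_n)^2} G_D$. To handle this, I would use that the exploration defining $A_n$ is measurable with respect to the values of $\Gamma$ on the deterministic dyadic skeleton $T_n$, and hence $\1_{D\backslash A_n} \in \sigma(\Gamma_{T_n})$. Applying the decomposition $\Gamma = \Gamma_{T_n} + \Gamma^{T_n}$ (with $\Gamma^{T_n}$ an independent GFF in $D \backslash T_n$) and combining it with the first decomposition should pin down the desired bound --- either directly, or by an alternative BBM-based second-moment computation in the spirit of the proof of Lemma~\ref{l1}, expanding $\E[X_n^2]$ as a sum over pairs of inactive squares $s, s' \in V_n$ and controlling diagonal and off-diagonal contributions using the independence of BBM branches and the summability $\sum_k k^{-3/2} < \infty$ of the critical survival probabilities.
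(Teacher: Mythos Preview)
Your Steps 1 and 2 are correct and the Pythagorean identity is an appealing idea, but Step 3 is where the argument breaks, and the break is not just a technicality. The bound you want,
\[
\E\!\left[(\Gamma, \1_{D\backslash A_n})^2\right] \;\le\; \iint_{D^2} G_D(x,y)\,dx\,dy,
\]
is \emph{false} in general for indicators that are correlated with the field. A clean counterexample: take two i.i.d.\ standard Gaussians $Z_1,Z_2$ and set $g_i=\mathrm{sign}(Z_i)$; then $\E[(g_1Z_1+g_2Z_2)^2]=2+4/\pi$, which strictly exceeds $\sup_{|h_i|\le 1}\E[(h_1Z_1+h_2Z_2)^2]=2$. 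The construction of $A$ is precisely designed to bias $(\Gamma,\1_{D\backslash A_n})$ towards positive values (this is exactly what Lemma~\ref{l1} exploits), so there is no reason to expect the deterministic Green's-function bound to survive here. Your suggestion to use the $T_n$--decomposition does not close the gap either: once you condition on $\Gamma_{T_n}$ you are left with $(\Gamma_{T_n},\1_{D\backslash A_n})$, which is essentially $X_n$ again (on each $s$ the relevant harmonic parts agree), so the argument becomes circular.

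The paper's proof does not go through the Pythagorean identity. It writes $(\Gamma_A,\1_{D\backslash A_n})=(\Gamma_A,1)-\sum_{s\in K_n}\gamma_n(s)$ (your alternative route, over \emph{active} rather than inactive squares), bounds $\Var(\Gamma_A,1)\le\Var(\Gamma,1)$ trivially, and then carries out the BBM second-moment computation in full. This last step is the real work and is not a routine diagonal/off-diagonal estimate: for each pair $s,s'\in K_n$ with common ancestor $w$ at depth $p$, one splits the telescoping increments of $\gamma_\cdot$ according to whether they occur before or after level $p$, uses conditional independence given $\Gamma_{T_p}$ together with the optional stopping argument from Lemma~\ref{l1} to convert the post-$p$ increments into BM functionals, and controls the pre-$p$ terms via the Gaussian bound~\eqref{GB}. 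Each block contributes $O(8^{-p}\sqrt{p}\log p)$, and summing over $p$ gives the uniform $L^2$ bound. Your final sentence gestures at this, but the actual computation---in particular the three-way split and the use of~\eqref{GB}---is the substance of the claim and cannot be replaced by the soft argument of Steps~1--3.
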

Indeed, if $ (\Gamma_A,\1_{D\backslash A_n})$ would converge in probability towards $(\Gamma , 1)$, then it would converge also in $L^1$, and we have seen in the previous paragraph that 
this can not be the case. 

Deriving Claim \ref {l3}  requires some care. We have to bound covariances of the increments of the integral of the harmonic function in two squares, $s$ and $s'$, at each step of the process. To do that, we separate the increments according to whether or not they come from the conditional expected value of $T_m$ with $m$ bigger or equal, $p$, the height of $s\wedge s'$, the last common ancestor of $s$ and $s'$. We realise that if we condition according to the values of the GFF in $T_p$  many terms become constant and allow us to go the increments of level $p$, instead of $n$.

\begin{proof}[Proof of the claim]
As in the beginning of Lemma \ref{l1}, $(\Gamma_A,\1_{D\backslash A_n})=(\Gamma_A,1)+\sum_{s\in K_n}\gamma_n(s)$. Given that $\Var(\Gamma_A,1)\leq \Var (\Gamma,1)$ it is just enough to bound  
$$ 
\E \left [ \sum_{s, s' \in K_n} \gamma_n (s) \gamma_n(s') \right ].$$
We do this by writing $\gamma_n (s)$ and $\gamma_n (s')$ as the sum of the increments at each iteration step. Things are a little bit messier than for the first moment because one has more terms to evaluate.  For $s,s' \in \SS_n$, we will have to consider the common ancestor  $w=s\wedge s'$. In the following lines, we first fix $p \ge 2$ and $w$ a $2^{-p}$-daydic square.  

 For any $m,o\geq p$  conditionally on $\Gamma_{T_p}$,  $(\gamma_{m+1}-\gamma_{m})(s) \I{s\in K_n}$ and  $(\gamma_{o+1}-\gamma_{o})(s') \I{s\in K_n}$ are independent. 
 Hence,
\begin{align*}
&\sum_{p\leq m,o< n}\sum_{\substack{s,s'\in \SS^n\\s\wedge s'=w}} \E\left[(\gamma_{m+1}-\gamma_m)(s)(\gamma_{o+1}-\gamma_o)(s')\I{s,s'\in K_n}\right] \\
=&\sum_{p\leq m,o<n}\sum_{\substack{s,s'\in \SS^n\\s\wedge s'=w}}\E\left[\E\left[ (\gamma_{m+1}-\gamma_m)(s)\I{s\in K_n}\mid \Gamma_{T_p}\right]\E\left[ (\gamma_{o+1}-\gamma_o)(s')\I{s'\in K_n}\mid \Gamma_{T_p}\right]\right] \\
=& \sum_{\substack{s,s'\in \SS^n\\s\wedge s'=w}}8^{-n}\E\left[ \E\left[ (W^s(nT)-W^s(pT))\I{s\in K_n}\mid \Gamma_{T_p}\right]\E\left[ (W^{s'}(nt)-W^{s'}(pt))\I{s'\in K_n}\mid \Gamma_{T_p}\right] \right] \\
\leq & \sum_{\substack{s,s'\in \SS^n\\s\wedge s'=w}}8^{-n}\E\left[(W^s(pT)+1)^2\I{w\in K_p} \right] \leq C 8^{-p} \sqrt{p},
\end{align*}
where for the third equality we used the same technique as in Lemma \ref{l1} and for the fourth and fifth we just use the optional stopping theorem for the BM $B$ and for $B^2_t -t$.

It is also true that  $\P(u\in K_n\mid T_p)$ is constant for all $u$ with ancestor $w$ and that conditionally on  $\Gamma_{T_p}$, $\{s\in K_n\}$ is independent of $\{s'\in K_n\}$.
This allows us to compute the following second term
\begin{align*}
&\sum_{0\leq m,o<p} \sum_{\substack{s,s' \in \SS^n\\ s\wedge s=w}}\E\left[(\gamma_{m+1}-\gamma_m)(s)(\gamma_{o+1}-\gamma_o)(s')\I{s,s'\in K_n}\right]\\
=& \sum_{0\leq m,o<p} \sum_{\substack{s,s' \in \SS^n\\ s\wedge s=w}}\E\left[(\gamma_{m+1}-\gamma_m)(s)\P\left[\I{s\in K_n} \mid\Gamma_{T_p} \right] (\gamma_{o+1}-\gamma_o)(s')\P\left[\I{s'\in K_n} \mid\Gamma_{T_p} \right] \right]\\
=& \E\left[\gamma_p(w)^2 \I{w\in K_p} \right] \leq C 8^{-p}\sqrt{p} \log(p),
\end{align*}
where in the last step we have used \eqref{GB} and the fact that the variance of $\gamma_p(w)$ is bounded by that of $(\Gamma,\1_w)$.

For the remaining term we need to bound the cross -product and using similar remarks as before we have that
 \begin{align*}
&\sum_{0\leq m<p\leq o <n} \sum_{\substack{s,s' \in \SS^n\\ s\wedge s=w}}\E\left[(\gamma_{m+1}-\gamma_m)(s)(\gamma_{o+1}-\gamma_o)(s')\I{s,s'\in K_n}\right]\\
=& \sum_{0\leq m<p\leq o <n} \sum_{\substack{s,s' \in \SS^n\\ s\wedge s=w}}\E\left[(\gamma_{m+1}-\gamma_m)(s)\P\left[\I{s\in K_n} \mid\Gamma_{T_p} \right] \E\left[(\gamma_{o+1}-\gamma_o)(s')\I{s'\in K_n} \mid\Gamma_{T_p} \right] \right]\\
=& -\E\left[\gamma_p(w)(-W^{w}(pT)+1)c(W^{w}(pT),n-p) \I{w\in K_p}  \right] \leq C 8^{-p}\sqrt{p} \log(p),
\end{align*}
where $c(x,m)$ is the probability than a BM hits height $x+1$ before time $mT$.

Summing all the previous terms up, we get that 
\begin{align*}
\E\left[ \sum_{s,s' \in K_n}\gamma_n(s)\gamma_n(s')\right]\leq C' +  C\sum_{p=2}^\infty 4^{-p} \sqrt{p} \log(p)<\infty.
\end{align*}
\end {proof}

\subsection{The example in higher dimensions}

We now explain how to adapt the previous example to the higher-dimensional setting. 
The only slight is that in the two-dimensional case, we used the scale invariance of the GFF, while 
we will now  use the scaling relation \eqref{SGF}. 

To adapt our example, let us define $D =S^\emptyset:= (0,1)^d$. We use the $d$-dimensional dyadic hypercubes denoted now by $S^u$ where $u$ are finite sequences in $\{ 1, \ldots,2^d \}$. When $\Gamma$ is a GFF in $D$, we are now going to discover its values on all simultaneously growing all the  $(d-1)$-dimensional mid-hyperplanes. Then, the iterative construction proceeds in almost the same way, but with a notable difference. Due to the different scaling behaviour of the GFF, if the evolution of the conditional mean height during the first iteration evolves like a Brownian motion up to some time $T$, then the evolution during the second iteration is that of a Brownian motion during time $T \times 2^{d-2}$, and so on. In other words, the intervals between the branching times of the branching Brownian motion will grow exponentially, and the $n$-th branching time will be $T_n = T  (2^{(d-2)n} -1 ) / ( 2^{d-2} - 1 )$ instead of $nT$. 

Other than that, nothing in the previous discussion changes. Lemma \ref {l1} together with Claim \ref{l3} become readily: 
\begin {lema}\label {l2}  For this $A$ we have that $  \E\left [  (\Gamma_{A},\1_{D\backslash A_n})  \right ] = \E \left [ \sum_{s \in V_n} \hbox {Volume} (s) \right ]$ and the second moment of $(\Gamma_{A},\1_{D\backslash A_n})$ is uniformly bounded. 
\end {lema}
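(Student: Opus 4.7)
The plan is to follow the proofs of Lemma \ref{l1} and Claim \ref{l3} essentially verbatim, replacing only a handful of scaling constants by their $d$-dimensional counterparts. Concretely: each hypercube now branches into $2^d$ children, the $n$-th branching time of the associated BBM is $T_n=T(2^{(d-2)n}-1)/(2^{d-2}-1)$ as noted in the paragraph preceding the lemma, and the telescoping identity between $\gamma$-increments and BM increments becomes $(\gamma_{m+1}-\gamma_m)(s)=2^{d(m-n)}(\gamma_{m+1}-\gamma_m)(s')$ for $s\in\SS_n$ with level-$m$ ancestor $s'$. The induction of Lemma \ref{l1} then yields the key identity $\sum_{s\in K_n}\gamma_n(s)=2^{-dn}\sum_{s\in K_n}W^s(T_n)$.

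For the first moment identity I would reproduce the argument of Lemma \ref{l1} unchanged: the decomposition $\E[(\Gamma_A,\1_{D\setminus A_n})]=-\E[\sum_{s\in K_n}\gamma_n(s)]$ still holds because $\Gamma_{\partial A_n}=\Gamma_A$ on $V_n$ and $\Gamma_{\partial A_n}=\Gamma_{T_n}$ on $K_n$; applying the telescoping identity above together with optional stopping for each BM $W^s$ stopped at $\tau_s\wedge T_n$ transforms the right-hand side into $2^{-dn}\cdot(-2^{dn})\,\E[\sum_{s\in V_n}\mathrm{Vol}(s)]=-\E[\sum_{s\in V_n}\mathrm{Vol}(s)]$, which gives the claim.

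For the second moment bound I would repeat the three-regime analysis of Claim \ref{l3}. Fix $p\geq 2$ and $w\in\SS_p$, and split the sum $\sum_{s,s'\in\SS_n,\,s\wedge s'=w}\E[\gamma_n(s)\gamma_n(s')\1_{s,s'\in K_n}]$ according to whether $m,o\geq p$, $m,o<p$, or $m<p\leq o$. Conditioning on $\Gamma_{T_p}$ makes the future BMs $W^s$ and $W^{s'}$ independent, starting from $W^w(T_p)$, and the same conditional-expectation manipulation as in the 2D case reduces each regime to a bound involving $\E[(W^w(T_p)\pm 1)^2\1_{w\in K_p}]$, with an extra $\log p$ factor coming from \eqref{GB} in the middle and mixed regimes. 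Using optional stopping for $W^w(t)^2-t$ stopped at $\tau_w\wedge T_p$ gives $\E[W^w(T_p)^2\1_{w\in K_p}]\leq C\sqrt{T_p}=O(2^{(d-2)p/2})$ for $d\geq 3$. Multiplying by the $O(2^{2d(n-p)})$ pairs with common ancestor $w$, by the $2^{-2dn}$ normalization coming from applying the telescoping identity twice, and then summing over the $2^{dp}$ choices of $w$, each regime contributes $O(2^{-(d+2)p/2})$ at level $p$ up to polylogarithmic factors; summing over $p$ gives a bound independent of $n$.

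The main obstacle I expect is simply keeping track of the scaling constants. The decisive cancellation is that the $2^{2d(n-p)}$ pairs per common ancestor are matched by the $2^{-2dn}$ normalization up to a residual $2^{-2dp}$, and that residual defeats the $2^{dp}$ choices of $w$ thanks to the BM moment bound $\sqrt{T_p}\sim 2^{(d-2)p/2}$. Once this bookkeeping is spelled out, the argument is structurally identical to the two-dimensional one, and the uniform $L^2$-boundedness of $(\Gamma_A,\1_{D\setminus A_n})$ follows.
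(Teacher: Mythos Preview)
Your proposal is correct and is exactly what the paper intends: the paper gives no separate proof of Lemma~\ref{l2} but simply says that ``other than that, nothing in the previous discussion changes,'' and your sketch carries out precisely this adaptation of Lemma~\ref{l1} and Claim~\ref{l3} with the $d$-dimensional scaling constants. Your bookkeeping is right: the key point is that $\E[\tau\wedge T_p]=\int_0^{T_p}\P(\tau>s)\,ds\sim C\sqrt{T_p}\sim C\,2^{(d-2)p/2}$, which combined with the $2^{-2dp}$ residual and the $2^{dp}$ choices of $w$ yields the summable $2^{-(d+2)p/2}$ contribution per level.
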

Just as in the 2-dimensional case, this then implies that $A$ is not thin.

To upper bound the Minkowski dimension, the only difference is that the probability that a given dyadic hypercube of side-length $2^{-n}$ is active at the $n$-th iteration is now the probability that a Brownian motion does not hit level $1$ before time $T_n$, which leads to the estimate on the size of $A$ as in \hyperlink{2d}{(2)$_d$}. Indeed, if $N_n$ denotes the number of closed dyadic hypercubes that intersect $A$,
\begin{align*}
\E\left[ N_n \right]&\leq C \sum_{j=1}^{n}\sum_{s\in \SS^j}2^{(n-j)(d-1)}\E\left[\I{s	\subseteq A_j} \right]=C2^{n(d-1)}\sum_{j=1}^n 2^{j}\P\left (\text{BM hits 1 after time } T_n\right ) \\
&\leq C 2^{n(d-1)}\sum_{j=1}^n  2^{\left (-d/2+2\right )j}\leq C 2^{\max\{d-1,d/2+1\}n} .
\end{align*}
Thus, thanks the Markov inequality
\begin{align*}
\P\left[N_n \geq 2^{(\max\{d-1,d/2+1\}+\epsilon)n} \right] \leq C2^{-\epsilon n},
\end{align*}
and thanks to the Borel-Cantelli Lemma, we can conclude that the upper Minkowski dimension of $A$ is almost surely bounded by $\max\{d-1,d/2+1 \}$.

We conclude that \hyperlink{2d}{(2)$_d$} holds for any $d \ge 3$.
\begin {prop}[(2)$_d$]
 This local set $A$ is not thin, and its upper Minkowski dimension is almost surely not larger than $\max\{d-1,d/2+1 \}$.
\end {prop}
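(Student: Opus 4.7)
\medbreak
\noindent The plan is to combine the two ingredients already assembled in this subsection: the first-moment estimate on the number $N_n$ of closed $2^{-n}$-dyadic hypercubes intersecting $A$ for the Minkowski-dimension bound, and Lemma \ref{l2} for non-thinness. Both parts follow the structure of the two-dimensional argument; the only change is that the geometrically-spaced branching times $T_j = T(2^{(d-2)j}-1)/(2^{d-2}-1)$ replace the linearly-spaced $nT$.

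First I would establish the upper bound on the Minkowski dimension. I would decompose $N_n$ according to the generation $j \leq n$ at which a hypercube's ancestor became inactive: each inactive $s \in \SS_j$ contributes at most $O(2^{(n-j)(d-1)})$ closed $2^{-n}$-dyadic boundary hypercubes. The probability that a given $s \in \SS_j$ is still active equals the probability that a one-dimensional Brownian motion has not hit level $1$ before $T_j \asymp 2^{(d-2)j}$, which is of order $2^{-(d-2)j/2}$. Summing yields $\E[N_n] \leq C\,2^{n \max\{d-1,\,d/2+1\}}$, exactly as in the display just above. A Markov bound applied to $\{N_n \geq 2^{(\max\{d-1,d/2+1\}+\epsilon)n}\}$ combined with the Borel-Cantelli lemma then delivers the almost-sure Minkowski-dimension bound.

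Next, to show that $A$ is not thin, I would apply Lemma \ref{l2} with the constant test function $f=1$ and replay the two-dimensional contradiction argument. The first part of the lemma gives $\E[(\Gamma_A,\1_{D\setminus A_n})] = \E[\sum_{s \in V_n}\hbox{Volume}(s)]$, which equals the probability that a one-dimensional Brownian motion has reached level $1$ by time $T_n$, and since $T_n \to \infty$ this tends to $1$. The second part of the lemma states that the sequence $(\Gamma_A, \1_{D\setminus A_n})$ is uniformly bounded in $L^2$, hence uniformly integrable. If $A$ were thin, the definition from Section \ref{thin1} would force $(\Gamma_A,\1_{D\setminus A_n}) \to (\Gamma_A, 1)$ in probability, and uniform integrability would promote this to $L^1$ convergence, in particular the expectations would converge. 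But $(\Gamma_A, 1) = (\Gamma, 1) - (\Gamma^A, 1)$ has expectation zero, as a difference of centered Gaussians, contradicting the fact that the expectations converge to $1$. Hence $A$ is not thin.

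The genuine technical obstacle is Lemma \ref{l2} itself: the first-moment identity adapts from Lemma \ref{l1} by essentially replacing the $4^{-n}$ with $2^{-nd}$, but the second-moment bound of Claim \ref{l3} relies on splitting the double sum over $s,s' \in K_n$ by the generation $p$ of the common ancestor $s \wedge s'$, reducing the conditional cross-terms to increments of a Brownian motion via the telescoping trick, and controlling the $L^2$ norm of $\gamma_p(w)$ on $\{w \in K_p\}$ via the Gaussian tail bound \eqref{GB}. I would have to verify that with the rescaled branching times $T_j$ the bound $C\,8^{-p}\sqrt{p}\log p$ is replaced by a quantity whose sum over $p \geq 2$ against the geometric factor $2^{dp}$ (coming from the change of normalization) still converges. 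Once this is checked, everything else in the present plan is routine.
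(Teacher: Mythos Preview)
Your proposal is correct and follows essentially the same route as the paper: the Minkowski-dimension bound via the displayed first-moment estimate on $N_n$ plus Markov/Borel--Cantelli, and non-thinness via Lemma \ref{l2} combined with the $L^2$-boundedness/uniform-integrability contradiction against the zero-mean limit. Your phrasing that $(\Gamma_A,1)$ has mean zero is in fact slightly cleaner than the paper's (which writes $(\Gamma,1)$ at that step), and your last paragraph correctly identifies that the only substantive work hides inside Lemma \ref{l2}, which the paper also takes as given.
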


\section{Small sets are thin (proof of (1)$_d$)} \label{small thin d}

Let us briefly note that the definition of thin sets can be extended to non-local sets: we say that a set $A$ is thin if for all $f$ smooth bounded function in $D$ we have that  $(\Gamma,f\1_{A_n})\stackrel{\P}{\to}0$ as $n\to \infty$. This definition is useful because  a.s. 
\begin{equation}\label{eq::thin}
\sum_{s\in \SS_n:s\nsubseteq D\backslash A_n}(\Gamma,f\1_s)=(\Gamma,f\1_{A_n}),
\end{equation}so that it is sufficient to bound the value of the GFF in hyper-cubes of size $2^{-n}$.

The following proposition links both definitions.
\begin{lema} \label{BE}
	Let $\Gamma$ be a GFF on $D$ and $A$ a local set. $A$ is thin in this last sense if only if $A$ is a thin local set.
\end{lema}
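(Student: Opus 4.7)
The plan is to reduce the equivalence to showing that a single ``noise'' term vanishes in the limit. I would proceed as follows.

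First, I would reformulate both definitions in parallel form. The thin-local-set definition from Section~\ref{thin1}, after subtracting $(\Gamma_A,f)$ from both sides, is equivalent to $(\Gamma_A,f\1_{A_n}) \stackrel{\P}{\to} 0$, while the definition just introduced is $(\Gamma,f\1_{A_n}) \stackrel{\P}{\to} 0$. Using the orthogonal decomposition $\Gamma = \Gamma_A + \Gamma^A$, and the fact that $A_n$ takes only finitely many values on each bounded region (so we can meaningfully test both $\Gamma_A$ and $\Gamma^A$ against $f\1_{A_n}$, cell by cell, exactly as is done in Section~\ref{thin1}), these two quantities differ by exactly $(\Gamma^A,f\1_{A_n})$. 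Hence the lemma reduces to showing that $(\Gamma^A,f\1_{A_n}) \stackrel{\P}{\to} 0$ for every smooth bounded $f$.

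The analytic core would then be to establish this vanishing. Since $\Gamma^A$ is the GFF in $D\setminus A$, it is supported there, so $(\Gamma^A,f\1_{A_n})$ equals $(\Gamma^A,f\1_{A_n\setminus A})$. Conditionally on $A$ (and on the further $\sigma$-algebra making $A_n$ deterministic), $\Gamma^A$ is a centered Gaussian with covariance $G_{D\setminus A}$, so
$$
\E\!\left[(\Gamma^A,f\1_{A_n\setminus A})^2 \,\Big|\, A\right] = \iint_{(A_n\setminus A)^2} f(x)\,G_{D\setminus A}(x,y)\,f(y)\,dx\,dy.
$$
Because $A_n\searrow A$, the indicator $\1_{A_n\setminus A}(x)\,\1_{A_n\setminus A}(y)$ is eventually $0$ on $(D\setminus A)^2$, so the integrand tends pointwise to $0$, while being dominated by $|f(x)|\,G_D(x,y)\,|f(y)|$, whose integral over $D\times D$ is finite. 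Dominated convergence, applied first conditionally and then unconditionally (the conditional second moment is itself bounded by the deterministic finite quantity $\iint |f|\,G_D\,|f|$), yields $L^2$ and therefore probability convergence of $(\Gamma^A,f\1_{A_n})$ to $0$. Combined with the first step, this closes the equivalence.

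The main obstacle is not the convergence itself but the definitional bookkeeping: one must make precise sense of $(\Gamma_A,f\1_{A_n})$ and $(\Gamma^A,f\1_{A_n})$ when $\1_{A_n}$ is random and non-smooth and the fields are only generalized functions. The dyadic structure is what makes this manageable, since on each atom $\{A_n=C\}$ we are testing against the \emph{deterministic} indicator $\1_{D\setminus C}$, which lies in $\HH^{-1}(D)$ by local integrability of the Green's function; once the cell-by-cell recombination is justified, everything reduces to the Gaussian calculation above.
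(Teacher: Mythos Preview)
Your proof is correct and follows exactly the same route as the paper: both reduce the equivalence to the identity $(\Gamma,f\1_{A_n}) - \bigl((\Gamma_A,f)-(\Gamma_A,f\1_{D\setminus A_n})\bigr) = (\Gamma^A,f\1_{A_n})$ and then to the single claim that $(\Gamma^A,f\1_{A_n}) \stackrel{\P}{\to} 0$. The paper's proof simply asserts this last convergence, whereas you supply the natural conditional-second-moment and dominated-convergence justification that the paper leaves implicit.
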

\begin{proof}
	It is enough to see that for all $f$ smooth and bounded function:
	\begin{align*}
	(\Gamma,f\1_{A_n})-((\Gamma_A,f)-(\Gamma_A,f\1_{D\backslash A_n})) = (\Gamma^A, f\1_{A_n})\stackrel{\P}{\to} 0 \ \ \ \ \ \text{ as } n \to \infty,
	\end{align*}
\end{proof}
This shows for instance that any deterministic closed set $A$ with zero Lebesgue measure is a thin local set. Indeed, if $\|f\|_\infty<1$, by dominated convergence, 
\begin{align*}
\E\left [ (\Gamma,f\1_{A_n})^2\right] = \iint_{A_n\times A_n} f(x) G_D(x,y)f(y)\d y\d x  \to 0
\end{align*}
as $n \to \infty$. 

\subsection {The case $d \ge 3$}
Now, we want to show that for any set with Minkowski dimension smaller than $1+(d/2)$ satisfies \eqref{eq::thin}. To do this, let us see how big are the values which actually ``give mass'' to the GFF.
\begin{lema}\label{lema::mass}
	Let $d\geq 3$ and $\Gamma$ be a GFF in $D\subseteq \R^d$. Then, there exists a deterministic constant $C_d$ such that for any bounded function $f$ with $\|f\|_\infty\leq 1$,
	\begin{equation}\label{eq::size}
	\E\left[\sum_{s\in \SS_n}|(\Gamma,f\1_{s})|\I{|(\Gamma,f\1_{s})|\geq C_d\sqrt{n}2^{-(d/2+1)n}}\right] \to 0 
	\end{equation}
	where $C_d$ is a deterministic constant.
\end{lema} 
\begin{proof}
To begin, let us recall that there exists an absolute constant $\bar C_d$ such that for any $s\in \SS_n$ and any bounded $\|f\|_\infty\leq 1$,
\begin{align}\label{BVd}
\iint_{s\times s} f(x)G(x,y)f(y)dxdy\leq \bar C_d  2^{-(d+2)n}.
\end{align}
By an exact computation we have that, if we define $K_d:=C_d^2/(2\bar C_d^2)$, then
\begin{linenomath}
	\begin{align*}
	\sum_{s\in \SS_n}\E\left[|(\Gamma,f\1_{s})|\I{|(\Gamma,f\1_{s})|\geq C_d\sqrt{n}2^{-(d/2+1)n}}\right]\leq 2^{nd}2^{-(d/2+1)n}e^{-K_dn}\to 0,
	\end{align*}
\end{linenomath}
when $K_d> \log(2)\cdot (d/2-1)$.
\end{proof}

We can now use the lemma to prove \hyperlink{1d}{(1)$_d$}.
\begin{prop}[\hyperlink{1d}{(1)$_d$}] \label{Thin Sets prop}
	Let $D\subseteq \R^d$ be an open set, $\Gamma$ a GFF in $D$ and $A$ a local set of $\Gamma$. If the upper Minkowski dimension of $A$
	is almost surely strictly smaller than $\max\{d-2,d/2+1\}$,  then $A$ is a thin local set.
\end{prop}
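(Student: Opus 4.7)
The plan is to invoke Lemma \ref{BE} first, reducing the statement to showing $(\Gamma,f\mathbf{1}_{A_n}) \to 0$ in probability for every smooth bounded $f$, and then to dichotomise according to which of $d-2$ and $d/2+1$ exceeds $\dim_{\text{Min}}(A)$. Since $\dim_{\text{Min}}(A) < \max\{d-2,d/2+1\}$ is logically equivalent to ``$\dim_{\text{Min}}(A) < d-2$ or $\dim_{\text{Min}}(A) < d/2+1$'', every $\omega$ in a full-probability event falls into at least one of the two cases below.

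In the first regime $\dim_{\text{Min}}(A) < d-2$, I would appeal to Kakutani's theorem (invoked just after Lemma \ref{polar}): upper Minkowski dimension dominates Hausdorff dimension, so a set of dimension strictly below $d-2$ has vanishing $(d-2)$-capacity and is a.s.\ polar for Brownian motion. Lemma \ref{polar} then yields $\Gamma_A = 0$ a.s., whence $(\Gamma_A,f\mathbf{1}_{D\setminus A_n}) = (\Gamma_A,f) = 0$ for every $n$ and $A$ is trivially thin. This covers the regime $d \geq 7$ (where $d-2 > d/2+1$) in full, plus the low-dimension subcase where $A$ is already polar.

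In the second regime $\dim_{\text{Min}}(A) < d/2+1$, I would decompose
\[
(\Gamma,f\mathbf{1}_{A_n}) = \sum_{\substack{s \in \SS_n \\ s \cap A \neq \emptyset}} (\Gamma,f\mathbf{1}_s),
\]
using that the lower-dimensional faces in $\TT_n$ are Lebesgue-null and do not contribute, and bound the sum by $N_n(A) \cdot \max_{s \in \SS_n}|(\Gamma,f\mathbf{1}_s)|$, where $N_n(A)$ counts the $2^{-n}$-dyadic cubes meeting $A$. Fixing a countable increasing sequence $\alpha_k \uparrow d/2+1$ together with auxiliary $\beta_k \in (\alpha_k, d/2+1)$, the definition of upper Minkowski dimension ensures $N_n(A) \leq 2^{n\alpha_k}$ eventually on the event $\{\dim_{\text{Min}}(A) < \alpha_k\}$, while Lemma \ref{BCn} applied to each $\beta_k$ (a countable union of null sets remains null) yields $\max_{s}|(\Gamma,f\mathbf{1}_s)| = o(2^{-n\beta_k})$ a.s. Their product is $o(2^{n(\alpha_k - \beta_k)}) \to 0$, and since $\{\dim_{\text{Min}}(A) < d/2+1\} = \bigcup_k \{\dim_{\text{Min}}(A) < \alpha_k\}$ has full probability, a.s.\ convergence follows, which is stronger than the required convergence in probability.

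All the genuine analytic work is packaged in Lemma \ref{BCn}, which rests on the variance estimate \eqref{BVd} applied to the $O(2^{nd})$ cubes of generation $n$ combined with a Borel--Cantelli argument. Granted that input, the proposition becomes a comparison of two exponential rates: the growth rate $\alpha$ of the number of cubes hitting $A$ against the decay rate $\beta$ of the GFF mass in a single cube. The only genuinely delicate point to handle is the randomness of $\dim_{\text{Min}}(A)$, which is why the countable exhaustion by the thresholds $\alpha_k$ is needed; the polar branch is required only when $d \geq 7$ and the counting branch only when $d \leq 5$, with $d = 6$ the borderline case where either argument applies.
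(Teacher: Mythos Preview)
Your proof is correct and follows essentially the same route as the paper: split into the polar case (via Lemma~\ref{polar}) and the counting case (bounding $\sum_{s}|(\Gamma,f\1_s)|$ by $N_n \cdot \max_s|(\Gamma,f\1_s)|$ and applying Lemma~\ref{BCn}), then invoke Lemma~\ref{BE}. The only cosmetic differences are that the paper parametrizes the exhaustion by the events $\tilde\Omega_\beta=\{N_n=o(2^{n(d/2+1-\beta)})\}$ rather than by your sequence $(\alpha_k,\beta_k)$, and it phrases the dichotomy as depending on $d$ (which fixes $\max\{d-2,d/2+1\}$) rather than on $\omega$ --- a point your closing remark already concedes.
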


\begin{proof} 
	Let us first note that if the upper Minkowski dimension $\delta (A)$ of $A$ is strictly smaller than $d-2$, then $A$ is polar, so that
	Lemma \ref{polar} implies that $\Gamma_A=0$, and thus $A$ is a thin local set.

	The following argument will in fact not use the fact that $A$ is a local set. Note that WLOG we can take $\|f\|_\infty\leq 1$. Let us now define $M_n:=M_n(A)$ as the amount of open dyadic squares of size $2^{-n}$ that intersect $A$. Then, by studying whether the integral of the field on each square is smaller than $ C_d\sqrt{n}2^{-(d/2+1)n}$, we have that $\P\left (|(\Gamma,f\1_{A_n})|\geq \epsilon\right )$ is smaller than or equal to
	\begin{align}\label{e. separation}
 \P\left( \sum_{s\in \SS_n}|(\Gamma,f\1_{s})|\I{|(\Gamma,f\1_{s})|\geq C_d\sqrt{n}2^{-(d/2+1)n}}\geq \epsilon \right)+ 	\P\left (M_n 2^{-(d/2+1)n}\sqrt{n}C_d\geq \epsilon\right ) .
	\end{align}
	The first term converges to $0$ as $n\to \infty$ thanks to Lemma \ref{lema::mass}. Also, as $n\to \infty$ the second term converges to $0$. To see this, note that $M_n\leq N_n$, the amount of closed dyadic squares that intersect $A$. This implies that the second term is smaller than or equal to $\P(N_n\geq \epsilon C_d2^{(d/2+1)n}\sqrt{n})$. This term converges to 0 because the Minkowski dimension of $A$ is smaller than $1+(d/2)$.
\end{proof}

\begin{rem}
	Let us note that the proof of Proposition \ref{Thin Sets prop} can be improved in the case where $d$ is either $3$, $4$ or $5$. In this case, if $N_n(A) 2^{-(d/2+1)n}\sqrt{n}C_d$ converges to $0$ in probability, then $A$ is thin.
	\end{rem}

Note that with this proposition and its proof we can get some other basic properties of thin sets.
\begin{cor} \label{Cord}Let $D\subseteq \R^d$ be an open set, $\Gamma$ a GFF on $D$ and $A$, $B$ thin local sets. If the upper Minkowski dimension of $A$
is strictly smaller than $d/2+1$, then:
	\begin{enumerate}
		\item $A\cup B$ is also a thin local set.
		\item If $h_A$ is integrable (i.e., such that  $\int_{D \setminus A} | h_A | < \infty$) and $B$ has 
		zero Lebesgue measure, then a.s. $B\backslash A$ is thin for $\Gamma^A:=\Gamma-\Gamma_A$.
	\end{enumerate}
\end{cor}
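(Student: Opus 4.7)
The plan is to verify thinness via the criterion of Lemma \ref{BE}: a set $C$ is thin precisely when $(\Gamma, f\1_{C_n})\to 0$ in probability for every smooth bounded test function $f$. For (1) I will use inclusion--exclusion on the dyadic approximations, together with thinness of $A$ and $B$. For (2) I will decompose $\Gamma^A=\Gamma-\Gamma_A$: the harmonic contribution is killed by dominated convergence (using $|B|=0$ and $h_A\in L^1(D\setminus A)$), and the remaining Gaussian piece is killed by a variance bound coming from the thinness of $B$.

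For (1): since $A_n$ and $B_n$ are unions of dyadic cubes, $(A\cup B)_n=A_n\cup B_n$, and
\begin{align*}
(\Gamma,f\1_{(A\cup B)_n})=(\Gamma,f\1_{A_n})+(\Gamma,f\1_{B_n})-(\Gamma,f\1_{A_n\cap B_n}).
\end{align*}
The first two terms tend to $0$ in probability by thinness of $A$ and of $B$. For the cross-term, since $A_n\cap B_n\subseteq A_n$ and $G_D\geq 0$,
\begin{align*}
\Var\bigl((\Gamma,f\1_{A_n\cap B_n})\bigr)\leq \|f\|_\infty^2\iint_{A_n\times A_n}G_D(x,y)\,\d x\,\d y=\|f\|_\infty^2\Var\bigl((\Gamma,\1_{A_n})\bigr).
\end{align*}
Applying thinness of $A$ to the constant function $1$ (which is smooth and bounded on the bounded domain $D$), and using that convergence in probability is equivalent to $L^2$ convergence for centred Gaussians, the right-hand side tends to $0$; hence so does the cross-term. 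Locality of $A\cup B$ follows from property (a) of Section 2.2, and Lemma \ref{BE} concludes (1).

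For (2): I view $\Gamma^A$ as a distribution on all of $D$ by extending it by zero on $A$, so that for any bounded $g$ on $D$, $(\Gamma^A,g)=(\Gamma^A,g\1_{D\setminus A})$. Setting $C_n:=(B\setminus A)_n\cap(D\setminus A)\subseteq B_n$ and using $\Gamma^A=\Gamma-\Gamma_A$ together with the integrability of $h_A$ on $D\setminus A$,
\begin{align*}
(\Gamma^A,f\1_{(B\setminus A)_n})=(\Gamma,f\1_{C_n})-\int_{D\setminus A}h_A(x)\,f(x)\,\1_{C_n}(x)\,\d x.
\end{align*}
The sequence $(B\setminus A)_n$ is decreasing, with $\bigcap_n(B\setminus A)_n=\overline{B\setminus A}\subseteq B$; intersecting with $D\setminus A$, the limit is contained in $B\setminus A$, which has zero Lebesgue measure since $|B|=0$. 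Dominated convergence with dominant $\|f\|_\infty|h_A|\in L^1(D\setminus A)$ therefore sends the integral to $0$. The remaining Gaussian term has variance at most $\|f\|_\infty^2\iint_{B_n\times B_n}G_D=\|f\|_\infty^2\Var((\Gamma,\1_{B_n}))$, which tends to $0$ by thinness of $B$ (same Gaussian argument as in (1)). Hence $(\Gamma^A,f\1_{(B\setminus A)_n})\to 0$ in $L^2$, and Lemma \ref{BE} applied to the GFF $\Gamma^A$ on $D\setminus A$ and to the set $B\setminus A$ finishes (2).

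The main technical obstacle is in (2): one has to make rigorous both the identity $(\Gamma^A,g)=(\Gamma^A,g\1_{D\setminus A})$ for bounded $g$ whose support may meet $A$, and the integral representation $(\Gamma_A,f\1_{C_n})=\int h_A f\1_{C_n}$. The integrability assumption on $h_A$ is exactly what is needed here: it provides a consistent pointwise description of $\Gamma_A$ off $A$ that, together with $\Gamma=\Gamma_A+\Gamma^A$, makes the ``restriction to $D\setminus A$'' operation well-defined even when $A$ carries positive Lebesgue measure.
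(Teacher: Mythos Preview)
Your argument has a genuine gap in both parts, stemming from the same mistake: you treat $(\Gamma,\1_{A_n})$ and $(\Gamma,\1_{B_n})$ as centred Gaussian random variables in order to upgrade convergence in probability to $L^2$ convergence. They are not Gaussian. The sets $A_n$ and $B_n$ are random and may depend on $\Gamma$ (this is the whole point of local sets), so that $(\Gamma,\1_{A_n})=\sum_u (\Gamma,\1_u)\1_{\{A_n=u\}}$ is a $\Gamma$-dependent mixture and is generically non-Gaussian. Hence thinness, which only gives convergence in probability, does not imply that $\E[(\Gamma,\1_{A_n})^2]\to 0$. For the same reason your displayed ``variance'' bound
\[
\Var\bigl((\Gamma,f\1_{A_n\cap B_n})\bigr)\ \leq\ \|f\|_\infty^2\iint_{A_n\times A_n}G_D(x,y)\,\d x\,\d y
\]
is unjustified: the right-hand side is itself random, and its expectation is \emph{not} $\E[(\Gamma,\1_{A_n})^2]$ since $A_n$ and the field are correlated. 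Notice also that your proof of (1) never uses the hypothesis $\dim_{\mathrm{Mink}}A<d/2+1$; that is a warning sign.

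The paper avoids this by exploiting the dimension hypothesis directly rather than thinness of $A$. Lemma~\ref{BCn} gives the \emph{pathwise} statement that almost surely $\sup_{s\in\SS_n}|(\Gamma,f\1_s)|\,2^{\beta n}\to 0$ for every $\beta<d/2+1$. Combined with $N_n(A)=o(2^{(d/2+1-\beta)n})$, one gets, for any (possibly random) union $E_n$ of cubes contained in $A_n$,
\[
|(\Gamma,f\1_{E_n})|\ \leq\ \sum_{s\in\SS_n:\,s\subseteq A_n}|(\Gamma,f\1_s)|\ \leq\ N_n(A)\cdot\sup_{s\in\SS_n}|(\Gamma,f\1_s)|\ \longrightarrow\ 0\quad\text{a.s.}
\]
This is applied with $E_n=A_n\setminus B_n$ in (1) and with $E_n=(B\setminus A)_n$ (together with the dominated-convergence handling of $\int h_A f\1_{(B\setminus A)_n}$, which you did correctly) in (2). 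The point is that the uniform cube-wise bound holds simultaneously for all cubes, so it transfers to arbitrary random subcollections; your $L^2$/Gaussian route does not.
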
	

\begin{proof}
	\begin{enumerate}
		\item 	Note that for any bounded smooth function $f$: 
		\begin{align*}
			|(\Gamma,\1_{(A \cup B)_n})|
			&\leq |(\Gamma,f\1_{B_n})|+|(\Gamma,f\1_{A_n\backslash B_n)})|\stackrel{\P}{\to} 0, \ \ \ \ \text{as } n\to \infty,
		\end{align*}
		where the second term goes to $0$ because it can be written as a sum over elements of $\SS_n$ and the amount of terms in that sum is smaller than the cardinal $\{s\in \SS_n: s\subseteq A_n  \}$. Thus, one can bound the probability of it being bigger than $\epsilon>0$ by the analogue of \eqref{e. separation}. The same argument used in the proof of Proposition \ref{Thin Sets prop} shows the convergence to $0$.
		\item Let $f$ be a bounded function and note that the fact that because $h_A$ is integrable and $B$ has 0 measure $\int_{D\backslash A} h_A(x) \1_{(B\backslash A)_n} f(x) \d x$ goes to 0. Additionally $(\Gamma,f \1_{(B\backslash A)_n} )$ because of the same reason as in the proof of (1).
	\end{enumerate}
\end{proof}

In future work, we plan to prove that when the upper Minkowski dimension of $A$ is smaller than $d/2 +1$, then   $h_A$ is  integrable on $D\backslash A$, which will allow to relax a little bit the conditions in this last corollary. 

 Note that this does not answer the question whether the fact that $B$ is thin implies that its Lebesgue measure is $0$. 
Remark that such statements are non-trivial, due for instance to the fact that we cannot exclude at this point, the fact that there exist thin local sets, with 
non-thin local subsets.

\subsection{The case $d=2$}\label{small thin 2}
This case is similar to general dimension, so we just remark where the differences lie.

We need a lemma analogue to Lemma \ref{lema::mass}.
\begin{lema}\label{lema::mass2}
	Let $d\geq 2$ and $\Gamma$ be a GFF in a bounded domain $D\subseteq \R^2$. Then for any bounded function $f$ such that $\|f\|_\infty\leq 1$,
	\begin{equation}\label{eq::size2}
	\E\left[\sum_{s\in \SS_n}|(\Gamma,f\1_{s})|\I{|(\Gamma,f\1_{s})|\geq C\sqrt{\log(n)}\sqrt{n}2^{-2n}}\right] \to 0 
	\end{equation}
	where $C$ is a deterministic constant.
\end{lema}
\begin{proof}
	We just need to note that for any bounded $D\subseteq \R^2$, there exists a constant $\bar C$ such that:
	\begin{align}\label{V2}
	\iint_{s\times s} f(x)G(x,y)f(y)dxdy\leq \bar C  2^{-4n}\log n.
	\end{align}
	Thus, similarly to Lemma \ref{lema::mass}
	\begin{linenomath}
		\begin{align*}
		\sum_{s\in \SS_n}\E\left[|(\Gamma,f\1_{s})|\I{|(\Gamma,f\1_{s})|\geq C_d\sqrt{n}2^{-(d/2+1)n}}\right]\leq 2^{2n}\sqrt{n}2^{-2n}e^{-K\log(n)}\to 0,
		\end{align*}
	\end{linenomath}
	when $K> 1/2$.
\end{proof}

Now,  we can conclude \hyperlink{12}{$(1)_2$} using the same reasoning as Proposition \ref{Thin Sets prop}
\begin{prop}[\hyperlink{12}{(1)$_2$}] \label{Thin Sets prop 2d}
	Let $D\subseteq \mathbb C$ be a bounded open set, $\Gamma$ a GFF on $D$ and $A$ local set. If there exists $\delta>0$ such that 
	$$  \E\left[N_n\right] = o \left( \frac{4^{n}}{\sqrt{n\log(n)}}\right)  $$
	as $n\to \infty$, then $A$ is a thin local set.
\end{prop}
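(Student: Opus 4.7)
My plan is to prove the stronger $L^1$ convergence $\E[|(\Gamma, f\1_{A_n})|] \to 0$ for every smooth bounded $f$; this implies the desired convergence in probability. Since the statement does not assume that $A$ is a local set, I work directly with the extended notion of thinness introduced at the start of this section. Writing $(\Gamma, f\1_{A_n}) = \sum_{s\in K_n} (\Gamma, f\1_s)$ where $K_n := \{s\in\SS_n : s\cap A \neq \emptyset\}$ (so $|K_n| = N_n$ and the faces of $A_n$ not lying in any such $s$ have zero Lebesgue measure), the triangle inequality gives
\begin{align*}
\E\bigl[|(\Gamma, f\1_{A_n})|\bigr] \leq \sum_{s\in \SS_n} \E\bigl[|(\Gamma, f\1_s)|\,\1_{s\in K_n}\bigr].
\end{align*}

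The two-dimensional counterpart of \eqref{BVd} that I will use is $\Var((\Gamma, f\1_s)) \leq C\|f\|_\infty^2\, n\, 4^{-2n}$, obtained by a scaling change of variables in $\iint_{s\times s} G_D(x,y)\, dx\, dy$: the extra factor $n$ compared with higher dimensions reflects the logarithmic singularity of the planar Green's function, and is precisely what forces the $n^{1/2+\delta}$ threshold in the hypothesis. Combining this variance bound, the Gaussian estimate \eqref{GB} (which holds for any event $E$, in particular one depending on the Gaussian), and Cauchy-Schwarz, I obtain the tail bound
\begin{align*}
\E\bigl[|(\Gamma, f\1_s)|\,\1_E\bigr] \leq C\|f\|_\infty \sqrt{n}\, 2^{-2n}\, \P(E)\sqrt{1+\log(1/\P(E))},
\end{align*}
which I apply with $E = \{s\in K_n\}$ of probability $p_s := \P(s\in K_n)$, so that $\sum_s p_s = \E[N_n]$.

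Summing over $s$ requires a Jensen/entropy argument. Concavity of $\sqrt{\cdot}$ applied with the probability weights $q_s := p_s/\E[N_n]$ gives
\begin{align*}
\sum_s p_s\sqrt{1+\log(1/p_s)} \leq \E[N_n]\sqrt{1 + \frac{1}{\E[N_n]}\sum_s p_s \log(1/p_s)},
\end{align*}
while Shannon's entropy inequality $-\sum_s q_s\log q_s \leq \log|\SS_n|$ translates into $\sum_s p_s\log(1/p_s) \leq \E[N_n]\log(|\SS_n|/\E[N_n])$. Using $|\SS_n| \leq C\, 4^n$, this yields
\begin{align*}
\E\bigl[|(\Gamma, f\1_{A_n})|\bigr] \leq C\|f\|_\infty \sqrt{n}\, 2^{-2n}\,\E[N_n]\sqrt{1+\log(C\, 4^n/\E[N_n])}.
\end{align*}

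To conclude, I write $\E[N_n] = a_n\cdot 4^n/n^{1/2+\delta}$ with $a_n\to 0$ and substitute; the right-hand side collapses to $C'\|f\|_\infty\, a_n\, n^{-\delta}\sqrt{1 + (1/2+\delta)\log n + \log(1/a_n)}$. A short case split (in the regime $\log(1/a_n) \leq \log n$ one uses $a_n\sqrt{\log n}/n^\delta \to 0$, and in the regime $\log(1/a_n) > \log n$ one uses $a_n\sqrt{\log(1/a_n)} \to 0$) shows this bound tends to zero. The main subtlety is that the naive Cauchy-Schwarz estimate $\E[|X|\1_E] \leq \sigma\sqrt{\P(E)}$ loses the logarithmic improvement and only yields convergence for $\delta > 1/2$; recovering the full range $\delta > 0$ requires both the refined Gaussian bound \eqref{GB} and the entropy estimate above.
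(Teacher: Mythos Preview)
Your argument is correct and takes a genuinely different route from the paper. The paper does \emph{not} prove $L^1$ convergence directly; instead it reduces via Lemma~\ref{no f} to controlling $\sum_{s\nsubseteq D\setminus A_n}|(\Gamma,\1_s)|$, introduces a threshold $m_n\sim 4^{-n}\sqrt{n\log n}$, and shows by a second-moment computation that the number $R_n$ of squares with $|(\Gamma,\1_s)|\geq m_n$ satisfies $R_n\geq 4^n/n^{1/2+\delta}$ with high probability. The key probabilistic input there is a delicate estimate on $\Var(R_n)$, which in turn requires correlation bounds for the bivariate Gaussians $((\Gamma,\1_s),(\Gamma,\1_{s'}))$ at mesoscopic separation. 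Once $N_n\leq R_n$, the worst possible choice of $N_n$ squares contributes at most $S_n:=\sum_s|(\Gamma,\1_s)|\1_{B_s}$, which is shown to vanish in $L^1$.

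Your approach bypasses the second-moment step entirely: you combine the pointwise bound $\E[|X|\1_E]\leq C\sigma\,\P(E)\sqrt{1+\log(1/\P(E))}$ (a consequence of \eqref{GB} and Cauchy--Schwarz, valid for \emph{any} event $E$ regardless of its dependence on $X$ or on $A$) with a Jensen/entropy estimate to sum over squares. This is strictly more elementary --- it uses no information about correlations between different $(\Gamma,\1_s)$ --- and it delivers $L^1$ convergence rather than merely convergence in probability. The paper's route, by contrast, gives additional structural information (a quantitative lower bound on the number of ``high'' squares of the GFF itself), and this variance machinery is what the paper later invokes to justify the two-dimensional refinement~\eqref{BV2}--\eqref{uxy} of the DAS conditions in Section~\ref{Thin sets def}; your argument does not need those conditions at all. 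Two cosmetic remarks: your set $K_n$ of \emph{open} squares meeting $A$ satisfies $|K_n|\leq N_n$ rather than equality (since $N_n$ counts \emph{closed} squares), but only the inequality $\sum_s p_s\leq\E[N_n]$ is used; and squares with $p_s=0$ are harmless in the entropy bound.
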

\begin{rem}
	Let us note that there is also an equivalent of Corollary \ref{Cord} when $d=2$. In this case we just need that $A$ and $B$ are thin local sets and that $\E\left[N_n(A)\right]$ is $o(4^n/\sqrt{n\log(n)} )$. 
\end{rem}

\section{Some comments about the definitions of thin local sets}\label{Thin sets def}

Let us now make some somewhat abstract comments about the definition of local sets. 
One general strategy used to define local sets is to use some deterministic ``enlargements'' of the random sets $A$ (see for instance \cite{WWln2}). To the best of our knowledge, only dyadic-type enlargements have been used in earlier works, but this is a somewhat arbitrary choice. 
For our purposes here, it seems natural to consider also other possible deterministic enlargements -- indeed, this a priori choice could be important, given that some property may hold for one approximation scheme, and not for the other.

Let us describe one possible class of discrete approximation schemes (DAS), for which the proofs of the present paper can be adapted rather directly. 

\medbreak
\noindent \textbf{Discrete aproximation schemes.}
Define a  pre-DAS for a domain $D\subseteq \R^d$ to be a sequence $(\AA_n)_{n \ge 0}$ of families of closed sets $\AA_n=(\BB_n,\CC_n)$ for which there exists some (large) constant $C\in \R$ such that the following holds for any $n\in \N$: 
\begin{enumerate}
	\item For any two distinct $c$ and $c'$ in $\CC_n$, the Lebesgue measure of $c \cap c'$ is zero. 
	\item For any $c$ in $\CC_n$ the diameter of $c$ is upper bounded by $C 2^{-n}$ and its volume is lower bounded by $2^{-nd}/C$. 
	\item 	$\text{Leb}(\bigcup_{b\in \BB_n} b) = 0$. And for all $E\subseteq \R^d$ compact, the cardinal of the elements of $\BB_n$ that intersect $E$ is finite
\end{enumerate} 

For a fixed pre-DAS $\AA_n$, take $\BB^n:=\bigcup_{b\in \BB_n} b$, the set of all points covered by elements of $\BB$. For all closed set $A\subseteq \bar D$,  define $\AA\{A\}_n$ as the set of all elements of $\CC_n$ that have a non empty intersection with $A\backslash \BB^n$ and take $\AA[A]_n$ the union of all sets in $\AA\{A\}_n$ with all the set in $\BB_n$ that have non-empty intersection with $A$. More formally, 
\begin{align*}
&\AA\{A\}_n:=\{c\in \CC_n: c\cap A\backslash \BB^n\neq \emptyset \},\\
&\AA[A]_n:=\bigcup_{\substack{c\in\{A\}_n} }c\cup \bigcup_{\substack{b\in \BB_n,\\b\cap A \neq \emptyset}} b.
\end{align*}
We then say that a pre-DAS $\AA_n$ is a DAS if for all closed set $A\subseteq \bar D$, $\AA[A]_n\searrow A$. 

In this context, we understand $\AA[A]_n$ as an approximation of $A$ using a union of elements in $\BB_n$ and $\CC_n$. It should be understood that 
the elements of $\CC_n$ are the only ones ``giving mass'' to $\AA[A]_n$. $\AA\{A\}_n$ represents all the set in $\CC_n$ that where used to construct $\AA[A]_n$.

Dyadic hyper-cubes provide an example of DAS -- more precisely, when $\CC_n$ are the closed dyadic hypercubes of side-length $2^{-n}$ intersected with $D$ and $\BB_n$ is empty. 
This is our canonical DAS and it is such that for all closed sets $A$ the cardinal of $\AA\{A\}_n$ is $N_n$.

Let us remark that condition (2) implies that if $A$ is bounded   $|\AA\{A\}_n|\leq CN_n$ and  that there exists an absolute constant $C_d$ such that for any $c\in \CC_n$ there exists $C_d$ such that \eqref{BVd} or \eqref{V2} holds.

\medbreak
\noindent \textbf{The generalised thin local sets.}
We are now ready to give an alternative definition of thin local sets. This definition coincides with that of \cite{WWln2} in the particular case when $h_A$ is integrable on $D \setminus A$ (so that working with DAS is not necessary). It is also similar to Lemma 3.10 \cite{MS1}, where they ask $\Gamma$ to be a.s. determined by the restriction of $\Gamma$ to $D\backslash A$. On the other hand, the first example presented in Section \ref{hints} is non-thin, but it is proven in \cite{ALS} that $\Gamma$ is a function of the restriction of $\Gamma$ to $D\backslash A$. This is because $A$ is measurable of this restriction and $\Gamma_A$ is a measurable function of $A$.
	
\begin{defi}[Generalised thin local sets] Let $\Gamma$ be a Gaussian free field on a domain $D$ and $A\subseteq D$ a local set. 
	We say that $A$ is a generalised thin local sets if for all $f$ smooth and with bounded support in $\R^d$ ($C_0^\infty(\R^d)$) and for all DAS $\AA_n$, the sequence $(\Gamma_A,f\1_{D\backslash\AA[A]_n})$ converges in 
	probability to $
	(\Gamma_A,f)$ when $n \to \infty$. 
\end{defi}

	Note that  $(\Gamma_A,f\1_{D\backslash\AA[A]_n})$ is always well defined thanks to the fact that when the supp$(f)$ is compact, $\AA[A]_n$ can take only finitely many values. 
	Also, as we have said before, if  $\int_{D \setminus A} |h_A| < \infty$, then the limit of $(\Gamma_A,f\1_{D\backslash\AA[A]_n})$ is a.s. equal to $\int_{D\backslash A} f(z)h_A(z)$ and this limit does not depend on the chosen DAS. The DAS framework is relevant in the case where the integral of $|h_A|$ on $D \setminus A$ diverges. 
	
	Additionally, when $\int_{D\setminus A} |h_A| < \infty $ it is actually enough to check the criteria for functions $f$ in $C_0^\infty(D)$, because when we approximate one function in $C_0^\infty(\R^d)$ restricted to $D$ by one in $C_0^\infty(D)$ both the left and right term of the definition converge to what they should.

Let us briefly note that the definition of thin sets can be extended to non-local sets: We say that a set $A$ is thin if for all $f\in C_0^\infty(\R^d)$ and for all DAS $\AA_\cdot$
\begin{align*}
\sum_{c\in \AA\{A\}_n}(\Gamma,f\1_{c})=(\Gamma,f\1_{\AA[A]_n})\stackrel{\P}{\to}0, \ \ \ \ \ \ \ \text{as } n\to \infty.
\end{align*} 
it is easy to see that an analogue of Lemma \ref{BE} also holds in this setup. This, together with the estimates \eqref{BVd} and \eqref{V2} allow us to prove two facts:
\begin{itemize}
	\item When a deterministic set $A$ has 0 Lebesgue measure it is generalised thin.
	\item  If the hypothesis of Proposition \ref{Thin Sets prop} or \ref{Thin Sets prop 2d} hold, then the set $A$ is generalised thin.
\end{itemize}

Additionally, note that when a set is a generalised thin local set, then it is thin. This implies that the sets $A$ defined in Section \ref{hints} are not thin local sets.

Let us remark that there are non-local random sets that are thin for one approximation scheme but not for another. Because in this paper we are mostly interested in local sets, we will only sketch the proof of this fact for $d=4$. Let us see that one can find non-thin sets that live in a deterministic hyperplane of dimension $3$. This implies that the given DAS actually does matter as one can always use a $\BB_n$ whose union contains this hyperplane. We construct this set by iteratively dividing a fix hyperplane using dyadic hypercubes, and only keeping those on which the integral of the GFF is bigger than $\delta 2^{-(d/2-1)n}=\delta 2^{-n}$. If we call this constructed set $A$, we can use a first and second moment estimate to show that if $\delta$ is small enough, there is a probability bounded away from $0$ that $N_n(A)\geq 3^{-n}2^{3n}$. It is now clear that the set $A$ is not thin when using only the dyadic hypercubes, as when we are on the event $\{N_n(A)\geq 3^{-n}2^{3n}\}$:
\[\sum_{c\in \AA\{A\}_n}(\Gamma,f\1_{c})\geq \delta 2^{-n}N_n(A)\geq \left( \frac{4}{3} \right)^n\to \infty.\]

We will finish by stating two open question. We still don't know whether the fact that a local set is thin for one particular approximation scheme implies it is a generalised thin local set. In particular, we are not able to show that the set $A$ defined in Section \ref{hints} is not thin for the approximation using dyadic hyper-cubes. Another open question is whether the union two (generalised) thin local sets is always a (generalised) thin local set. On the other hand, it is true that for all the important applications that have arisen at this point in time, this fact has only been needed when at least one of the sets is small enough. 

\bibliography{biblio}
\bibliographystyle{alpha}

\end{document}